\let\origsection=\section \def\section{\@ifstar{\origsection*}{\mysection}}
\def\mysection{\@startsection{section}{1}\z@{.7\linespacing\@plus\linespacing}{.5\linespacing}{\normalfont\scshape\centering\S}}
\renewcommand{\PrintDOI}[1]{\doi{#1}}
\numberwithin{equation}{section}
\numberwithin{figure}{section}
\let\polishlcross=\l
\def\l{\ifmmode\ell\else\polishlcross\fi}
\let\backslash=\smallsetminus
\def\moverlay{\mathpalette\mov@rlay}
\def\mov@rlay#1#2{\leavevmode\vtop{   \baselineskip\z@skip \lineskiplimit-\maxdimen
   \ialign{\hfil$\m@th#1##$\hfil\cr#2\crcr}}}
\newcommand{\charfusion}[3][\mathord]{
    #1{\ifx#1\mathop\vphantom{#2}\fi
        \mathpalette\mov@rlay{#2\cr#3}
      }
    \ifx#1\mathop\expandafter\displaylimits\fi}
\DeclareFontFamily{U}  {MnSymbolC}{}
\DeclareSymbolFont{MnSyC}         {U}  {MnSymbolC}{m}{n}
\DeclareFontShape{U}{MnSymbolC}{m}{n}{
    <-6>  MnSymbolC5
   <6-7>  MnSymbolC6
   <7-8>  MnSymbolC7
   <8-9>  MnSymbolC8
   <9-10> MnSymbolC9
  <10-12> MnSymbolC10
  <12->   MnSymbolC12}{}
\DeclareMathSymbol{\powerset}{\mathord}{MnSyC}{180}
\let\epsilon=\varepsilon
\let\rho=\varrho
\let\theta=\vartheta
\let\phi=\varphi
\theoremstyle{plain}
\newtheorem{thm}{Theorem}[section]
\newtheorem{prop}[thm]{Proposition}
\newtheorem{cor}[thm]{Corollary}
\newtheorem{lem}[thm]{Lemma}
\theoremstyle{definition}
\newtheorem{question}[thm]{Question}
\begin{document}

%TITLE, ETC
\title{Minimal Ramsey graphs for cyclicity}
\author{
Damian Reding
\and Anusch Taraz
}

\address{Technische Universit\"at Hamburg, Institut f\"ur Mathematik, Hamburg, Germany}
\email{\{damian.reding|taraz\}@tuhh.de}

\maketitle

\begin{abstract}
%serves as a model for an evolved constructive theory on the structure of Ramsey graphs.
%With the aim to obtain clues about the structural causes of Ramseyness for graphs, we study a nontrivial model class of Ramsey graphs that admits a usable characterization, namely the

We study graphs with the property that every edge-colouring admits a monochromatic cycle (the length of which may depend freely on the colouring) and describe those graphs that are minimal with this property. We show that every member in this class reduces recursively to one of the base graphs $K_5-e$ or $K_4\vee K_4$ (two copies of $K_4$ identified at an edge), which implies that an arbitrary $n$-vertex graph with $e(G)\geq 2n-1$ must contain one of those as a minor. We also describe three explicit constructions governing the reverse process. As an application we are able to establish Ramsey infiniteness for each of the three possible chromatic subclasses $\chi=2, 3, 4$, the unboundedness of maximum degree within the class as well as Ramsey separability of the family of cycles of length $\leq l$ from any of its proper subfamilies. %Our methods are combinatorial and entirely constructive.
\end{abstract}

%Our aim is to model a descriptive theory of graphs with the Ramsey property, seeing as any such remains evasive in the case of virtually any nontrivial single graph.

\linespread{1.3}

\section{Introduction and results}

By an $r$-\textit{Ramsey graph for} $H$ we mean a graph $G$ with the property that every $r$-edge-colouring of $G$ admits a monochromatic copy of $H$. Wo focus on the Ramsey graphs that are \textit{minimal} with respect to the subgraph relation, i.e. no proper subgraph is a Ramsey graph for $H$. As a consequence of Ramsey's theorem~\cite{RAM} such graphs always exist. Minimal Ramsey graphs, their constructions, number on a fixed vertex set, connectivity as well as extent of chromatic number and maximum degree have been investigated by Burr, Erd\H{o}s and Lov\'asz~\cite{BEL}, Ne\v{s}et\v{r}il and Rödl~\cite{NRC}, Burr, Faudree and Schelp~\cite{BFS} as well as Burr, Ne\v{s}et\v{r}il, Rödl~\cite{BNR} and others. More recently, the question of the minimum degree of minimal Ramsey graphs initiated by Burr, Erd\H{o}s, Lov\'asz~\cite{BEL} was picked up again by Fox, Lin~\cite{FL} and Szab\'o, Zumstein and Z\"urcher~\cite{SZZ}. Subsequently Fox, Grinshpun, Liebenau, Person and Szab\'o~\cite{FGLPS} have employed the parameter in a proof of Ramsey non-equivalence (or separability)~\cite{FGLPS} and also obtained some generalizations to multiple colours~\cite{FGLPSM}.

However, a persistent obstacle is that the structure of (minimal) Ramsey graphs for a specific graph $H$ is difficult to characterize, essentially because it requires a practical description of how graphs edge-decompose into $H$-free subgraphs. Indeed, few exact characterizations are known other than some simple ones for stars and collections of such~\cite{BEL}.

The obstacle turns out to be a lesser one if $H$ is relaxed to be a graph property. We say that a graph $G$ is an $r$-\textit{Ramsey graph for a graph property $\mathcal{P}$} (which is closed under taking supergraphs), if every $r$ edge-colouring of $G$ admits a monochromatic copy of a member of $\mathcal{P}$. The choice of the member is thus allowed to depend freely on the choice of colouring. We denote that class by $\mathcal{R}_r(\mathcal{P})$ and the subclass of minimal ones by $\mathcal{M}_r(\mathcal{P})\subset\mathcal{R}_r(\mathcal{P})$.

Indeed, this is not a far-fetched definition. Results on the corresponding notion of Ramsey numbers for graph properties appear across the literature both in and outside the context of Ramsey theory, e.g. connectivity~\cite{M}, minimum degree~\cite{KS}, planarity~\cite{B}, the contraction clique number~\cite{T} or, more recently, embeddability in the plane~\cite{FKS}. For a small number of such properties, the minimal order $R_r(\mathcal{P})$ of a Ramsey graph for $\mathcal{P}$ is known exactly, e.g. $R_r(\chi\geq k)=(k-1)^r+1$~\cite{LZ}. Most notable, however, is the characterization of the chromatic Ramsey number of $H$ as the Ramsey number for the graph property $Hom(H)$ by Burr, Erd\H{o}s and Lov\'asz~\cite{BEL}. The notion also connects naturally to classical graph parameters. Indeed, for every number $r\geq 2$ of colours we have that $G\in\mathcal{R}_r(\mathcal{C}_{\text{odd}})$, where $\mathcal{C}_{\text{odd}}$ denotes the property of containing an odd cycle, if and only if $\chi (G)\geq 2^r+1$ (for the \emph{if}-direction, note that if $G\notin\mathcal{R}_r(\mathcal{C}_{\text{odd}})$, then $G$ edge-decomposes into $\leq r$ bipartite graphs, whence a proper $2^r$-colouring of $V(G)$ is given by the $r$-tuples of $0$'s and $1$'s. The \emph{only if}-direction follows by a simple inductive argument on $r\geq 1$). Consequently we have that $G\in\mathcal{M}_r(\mathcal{C}_{\text{odd}})$ if and only if $G$ is minimal subject to $\chi(G)\geq 2^r+1$, so the study of $\mathcal{M}_r(\mathcal{C}_{\text{odd}})$ is precisely the study of the well-known notion of $(2^r+1)$-\emph{critical} graphs.
%whence $\delta (G)\geq 2^r$

The property we focus on in this paper is the property $\mathcal{C}$ of containing an arbitrary cycle. Indeed we have the following useful characterization of $\mathcal{R}_r(\mathcal{C})$ (and hence of $ \mathcal{M}_r(\mathcal{C})$) in terms of local edge-densities of subgraphs.

\begin{prop}\label{prop:NW}
For every integer $r\geq 2$, we have that $G\in\mathcal{R}_r(\mathcal{C})$ if and only if $\frac{e(H)-1}{v(H)-1}\geq r$ for some subgraph $H\subseteq G$, and consequently we have that $G\in\mathcal{M}_r(\mathcal{C})$ if and only if both $\frac{e(G)-1}{v(G)-1}=r$ and $\frac{e(H)-1}{v(H)-1}<r$ for every proper subgraph $H\subset G$.
\end{prop}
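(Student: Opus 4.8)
The plan is to connect the Ramsey property for $\mathcal{C}$ to a statement about edge-colourings producing monochromatic forests, and then to apply the Nash-Williams arboricity theorem. The key observation is that a colouring of $G$ avoids a monochromatic cycle precisely when each colour class is a forest (acyclic subgraph); thus $G \notin \mathcal{R}_r(\mathcal{C})$ if and only if the edge set of $G$ can be partitioned into at most $r$ forests, i.e. the arboricity $a(G) \leq r$. The quantity $\frac{e(H)-1}{v(H)-1}$ is exactly the relevant density to invoke here, so the core of the argument will be translating the Nash-Williams formula into the stated inequality.

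\medskip

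First I would record the forest reformulation: an $r$-edge-colouring of $G$ witnesses $G \notin \mathcal{R}_r(\mathcal{C})$ if and only if every colour class is acyclic, so $G \notin \mathcal{R}_r(\mathcal{C})$ exactly when $E(G)$ decomposes into at most $r$ forests. Next I would invoke the Nash-Williams arboricity theorem, which states that the minimum number of forests needed to cover $E(G)$ equals $\max_{H \subseteq G} \big\lceil \frac{e(H)}{v(H)-1} \big\rceil$, the maximum taken over subgraphs $H$ with at least one edge. Hence $G \notin \mathcal{R}_r(\mathcal{C})$ if and only if $\frac{e(H)}{v(H)-1} \leq r$ for every subgraph $H \subseteq G$, which by negation gives $G \in \mathcal{R}_r(\mathcal{C})$ if and only if $\frac{e(H)}{v(H)-1} > r$ for some $H$. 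The small index shift from $\frac{e(H)}{v(H)-1} > r$ to $\frac{e(H)-1}{v(H)-1} \geq r$ is where I would be careful: since $e(H)$ and $v(H)$ are integers and the ceiling in Nash-Williams rounds up, the strict inequality $\frac{e(H)}{v(H)-1} > r$ is equivalent to $e(H) \geq r(v(H)-1) + 1$, i.e. to $\frac{e(H)-1}{v(H)-1} \geq r$, giving exactly the claimed form.

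\medskip

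For the characterization of $\mathcal{M}_r(\mathcal{C})$ I would argue that minimality forces the density of $G$ itself down to the threshold. If $G \in \mathcal{R}_r(\mathcal{C})$ then by the first part some $H \subseteq G$ satisfies $\frac{e(H)-1}{v(H)-1} \geq r$; for $G$ to be minimal, no proper subgraph may already lie in $\mathcal{R}_r(\mathcal{C})$, so in fact $G$ must be its own witness (taking $H = G$ above, with any proper subgraph failing the inequality), which yields $\frac{e(G)-1}{v(G)-1} \geq r$ together with $\frac{e(H)-1}{v(H)-1} < r$ for all proper $H \subset G$. Finally I would rule out the strict inequality $\frac{e(G)-1}{v(G)-1} > r$ by a deletion argument: if $G$ had strictly more than $r(v(G)-1)+1$ edges, I could delete a suitable edge and the resulting proper subgraph $G'$ would still satisfy $\frac{e(G')-1}{v(G')-1} \geq r$, contradicting minimality; this forces equality $\frac{e(G)-1}{v(G)-1} = r$.

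\medskip

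The main obstacle I anticipate is the careful bookkeeping of the index shift between the Nash-Williams density $\frac{e(H)}{v(H)-1}$ and the paper's normalized density $\frac{e(H)-1}{v(H)-1}$, making sure the ceiling function and strict-versus-nonstrict inequalities line up exactly across the integrality constraints. The forest reformulation and the invocation of Nash-Williams are routine, and the minimality deduction is a standard edge-deletion argument, so the real content lies in verifying that the threshold $r$ is hit with equality and in confirming the equivalence of the two density formulations at the boundary.
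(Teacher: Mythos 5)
Your proposal is correct and follows exactly the paper's intended route: the paper derives Proposition \ref{prop:NW} as a direct translation of Nash--Williams' arboricity theorem via the observation that a cycle-free colouring is precisely a decomposition into at most $r$ forests, which is the same forest reformulation, integrality shift from $\frac{e(H)}{v(H)-1}>r$ to $\frac{e(H)-1}{v(H)-1}\geq r$, and edge-deletion argument for the equality in the minimal case that you give.
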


Since the graphs in $\mathcal{R}_r(\mathcal{C})$ are precisely those which do not edge-decompose into $r$ forests, one obtains Proposition \ref{prop:NW} as a direct translation of the following well-known theorem.

\begin{thm}\label{thm:NW}
(\text{Nash-Williams' Arboricity Theorem}~\cite{NW})
Every graph $G$ admits an edge-decomposition into $\left\lceil ar(G)\right\rceil$ many forests, where $ar(G):=\max_{J\subseteq G, v_J>1}\frac{e_J}{v_J-1}$.
\end{thm}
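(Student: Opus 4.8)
The plan is to prove the equivalent combinatorial statement that $E(G)$ partitions into $k$ edge-disjoint forests if and only if $e_J \le k(v_J-1)$ for every subgraph $J$ with $v_J > 1$, and then to specialise to $k = \lceil ar(G)\rceil$, for which the right-hand condition holds by the very definition of $ar(G)$. The necessity of the density condition is immediate: a forest spanning a vertex set of size $v_J$ has at most $v_J-1$ edges, so if $F_1,\dots,F_k$ cover $E(G)$ then each $F_i$ contributes at most $v_J-1$ edges to any $J$, giving $e_J \le k(v_J-1)$. All the content lies in the converse, for which I would follow one of two routes.

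The clean route is via matroid theory. First I would pass to the cycle (graphic) matroid $M(G)$ on ground set $E(G)$, whose independent sets are exactly the forests and whose rank function is $r(F) = v(F) - c(F)$, the number of spanned vertices minus the number of components of the subgraph with edge set $F$. The minimum number of forests covering $E(G)$ is then the matroid covering number, and the Matroid Covering Theorem of Edmonds and Nash-Williams evaluates it as $\max_{\emptyset \neq F \subseteq E(G)} \lceil |F|/r(F)\rceil$; since subsets of forests are forests, a covering of this size also yields a partition of the same size. To finish I would show this maximum equals $\lceil ar(G)\rceil$: splitting $F$ into the components $C_1,\dots,C_m$ of its subgraph and applying the density bound $e_{C_i} \le ar(G)\,(v_{C_i}-1)$ componentwise gives $|F| \le ar(G)\cdot r(F)$, hence $\lceil |F|/r(F)\rceil \le \lceil ar(G)\rceil$; equality is attained at the (without loss of generality connected) subgraph $J$ realising the maximum in the definition of $ar(G)$.

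If one prefers to avoid the matroid machinery, the converse also admits a direct augmenting argument, which I would carry out as follows. Put $k=\lceil ar(G)\rceil$ and among all families of $k$ edge-disjoint forests choose one covering the greatest number of edges; suppose for contradiction that some edge $e_0=x_0y_0$ remains uncovered. Since adding $e_0$ to any $F_i$ would create a cycle, $x_0$ and $y_0$ are joined by a path in each $F_i$. I would then run a breadth-first exploration of the fundamental cycles these paths generate, at each step either locating an edge that can be moved to another forest (an augmentation that frees some $F_i$ to absorb $e_0$, contradicting maximality) or enlarging an explored vertex set $S$. When the exploration halts without an augmentation, every $F_i$ restricted to $S$ is a spanning tree of $S$, while $e_0$ is an additional edge inside $G[S]$, so $e(G[S]) \ge k(v_S-1)+1$ and therefore $e(G[S])/(v_S-1) > k \ge ar(G)$, contradicting the choice of $ar(G)$ as the maximum density.

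I expect the main obstacle to be exactly this converse direction, and specifically the bookkeeping that guarantees the exploration terminates in a vertex set $S$ on which all $k$ forests are simultaneously spanning and connected (equivalently, the proof of the Matroid Covering Theorem that underlies the slick route). The density computation and the necessity half are routine; the augmentation and ``locked set'' analysis is where the real work sits.
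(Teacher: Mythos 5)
The paper offers no proof of this statement at all: Theorem~\ref{thm:NW} is imported as a black box, cited to Nash-Williams, and the paper's only contribution in this vicinity is its translation into Proposition~\ref{prop:NW}. Your proposal therefore does strictly more than the paper, and it is essentially correct. The necessity half and the density bookkeeping check out: the componentwise estimate $|F|\le ar(G)\cdot r(F)$, using $r(F)=\sum_i(v_{C_i}-1)$, is exactly the right way to pass from the vertex-form density $e_J/(v_J-1)$ to the matroid rank, and the equality case is clean because the maximiser defining $ar(G)$ may indeed be taken connected (a mediant inequality: a disconnected maximiser only loses density). Your matroid route --- Edmonds' Matroid Covering Theorem applied to the graphic matroid, then converting a cover into a partition --- is a complete and standard derivation, with the one caveat that it trades a black box for a black box: Nash-Williams' arboricity theorem \emph{is} the graphic-matroid instance of the covering theorem, so the substantive content sits inside that citation; this is, however, no worse than what the paper itself does. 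Your elementary route is the skeleton of the classical proof (maximal family of $k$ edge-disjoint forests, fundamental cycles of an uncovered edge, edge exchanges, and a terminal ``locked'' set $S$ on which every $F_i$ induces a spanning tree, forcing $e(G[S])\ge k(v_S-1)+1$ and hence a density contradiction with $ar(G)\le k$); you correctly identify that the entire difficulty is the exchange/exploration lemma, but you do not carry it out, so that route remains a plan rather than a proof. As submitted, the proposal stands or falls with the matroid route --- and that route stands.
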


We remark that this is not the first time that Theorem \ref{thm:NW} finds use in graph Ramsey theory, see e.g.~\cite{PR} for an account of how the theorem can be used to establish the relation $ar(G)\geq r\cdot ar(F)$ for every $r$-Ramsey graph $G$ of an arbitrary graph $F$.

For the rest of the paper we focus on the case $r=2$ and also write $\mathcal{R}(\mathcal{C}):=\mathcal{R}_2(\mathcal{C})$ and $\mathcal{M}(\mathcal{C}):=\mathcal{M}_2(\mathcal{C})$. Given the aforementioned relation between $\mathcal{M}(\mathcal{C})$ and $5$-critical graphs, the latter of which are completely described (in the language of \emph{constructibility}) by the well-known H\'ajos construction~\cite{HJS} originating in the single base graph $K_5$, one might suspect that a similar reduction to base graphs is possible for $\mathcal{M}(\mathcal{C})$. Indeed, our first result does just that. Our two base graphs will be $K_5-e\in\mathcal{M}(\mathcal{C})$
%(TODO??? a complete case breakdown is attached in the appendix)
and $K_4\vee K_4\in\mathcal{M}(\mathcal{C})$, the graph obtained by identifying two copies of $K_4$ at an edge; a quick computation based on Proposition \ref{prop:NW} shows that these are in $\mathcal{M}(\mathcal{C})$.

\begin{thm}\label{thm:first}
For every $G\in \mathcal{M}(\mathcal{C})$ there exists $n\in\mathbb{N}_0$ and a sequence $G_k$ of minimal Ramsey graphs for $\mathcal{C}$ such that
$$\{K_5-e, K_4\vee K_4\}\ni G_0\prec G_1\prec\ldots\prec G_{n}=G,$$
where $\prec$ denotes the minor relation. In fact, for every $k\in [n]$ one can take $G_{k-1}$ to be an arbitrary minimal Ramsey subgraph (for $\mathcal{C}$) of the Ramsey graph (for $\mathcal{C}$) obtained from $G_k$ by contracting an arbitrary edge that belongs to at most one triangle in $G_{k}$.
\end{thm}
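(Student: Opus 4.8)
The plan is to read Proposition~\ref{prop:NW} as saying that $G\in\mathcal{M}(\mathcal{C})$ precisely when $e(G)=2v(G)-1$ while every proper subgraph $H\subset G$ satisfies $e(H)\le 2v(H)-2$ (in matroid language, the members of $\mathcal{M}(\mathcal{C})$ are the circuits of the $(2,2)$-count matroid). The engine of the whole argument is the bookkeeping for a single contraction: if $f=xy$ lies in exactly $t$ triangles of $G$, then $G/f$ has $v(G)-1$ vertices and $e(G)-1-t$ edges, so $e(G/f)=2v(G/f)-t$. Hence, whenever $f$ lies in \emph{at most one} triangle, $e(G/f)\ge 2v(G/f)-1$ and Proposition~\ref{prop:NW} gives $G/f\in\mathcal{R}(\mathcal{C})$; any minimal Ramsey subgraph $G'\subseteq G/f$ then lies in $\mathcal{M}(\mathcal{C})$, has $v(G')\le v(G)-1$, and satisfies $G'\prec G$ since a subgraph of a contraction is a minor. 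I would therefore prove the theorem by induction on $v(G)$: if $G$ is a base graph take $n=0$, and otherwise one reduction step $G\mapsto G'$ followed by the inductive hypothesis for $G'$ yields the chain, the process terminating because $v$ strictly decreases and is bounded below (no circuit has $v<5$, as $v=4$ would force $e=7>\binom{4}{2}$).

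The crux is thus the lemma: \emph{every $G\in\mathcal{M}(\mathcal{C})$ other than $K_5-e$ and $K_4\vee K_4$ contains an edge lying in at most one triangle.} First I would record two consequences of the circuit condition. Deleting a vertex $u$ gives $e(G-u)=2v(G)-1-\deg(u)\le 2v(G-u)-2=2v(G)-4$, forcing $\deg(u)\ge 3$; moreover $\sum_u(4-\deg(u))=4v(G)-2e(G)=2$ with every summand at most $1$, so at least two vertices have degree exactly $3$. Secondly, a cut vertex $c$ would split $G$ into proper subgraphs $G_1,G_2$ meeting only in $c$, whence $e(G)=e(G_1)+e(G_2)\le 2v(G)-2$, a contradiction; so $G$ is $2$-connected.

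Now take a vertex $u$ of degree $3$ with $N(u)=\{a,b,c\}$. A triangle through $ua$ uses a vertex of $\{b,c\}$ adjacent to $a$, so, summing over the three edges at $u$, they carry altogether twice the number of edges inside $\{a,b,c\}$; if $\{a,b,c\}$ is not a triangle this total is at most four, and some edge at $u$ lies in at most one triangle. This settles the easy case, and the only obstruction is that $\{u,a,b,c\}$ induces a $K_4$. The remaining, genuinely harder, case is when \emph{every} degree-$3$ vertex sits in such a $K_4$. Here every edge incident to a degree-$3$ vertex lies in exactly two triangles, so the only candidates for a contractible edge join two vertices of $W:=\{u:\deg(u)\ge 4\}$, and the lemma reduces to the statement ``some edge of $G[W]$ lies in at most one triangle of $G$''. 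Applying the $2$-connectivity argument inside a $K_4$ shows that a $K_4$ meeting the degree-$3$ vertices contains at most two of them; consequently each degree-$3$ vertex has at least two neighbours in $W$ and at most one degree-$3$ neighbour, so the degree-$3$ vertices split into \emph{twinned pairs} glued onto an edge of $G[W]$ (completing it to a $K_4$) and \emph{solitary} vertices glued onto a triangle of $G[W]$.

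The main obstacle is to turn ``every edge of $G[W]$ has at least two triangle-witnesses'' together with the sparsity bound $e(G[W])\le 2|W|-2$ into the conclusion $|W|\le 3$, and then to read off the two configurations. The plan is a short extremal count: a witness of a $W$-edge is either a third vertex of $W$ or an attached degree-$3$ vertex seeing both endpoints, while the edge-count identity $e(G[W])=2|W|-1-n_3+M$ (with $n_3$ the number of degree-$3$ vertices and $M$ the number of twinned pairs) limits how many witnesses the available vertices can supply. Pushing this through, I expect $|W|=2$ to force two twinned pairs on one edge, i.e.\ $G=K_4\vee K_4$, and $|W|=3$ to force $G[W]$ to be a triangle carrying exactly two solitary vertices, i.e.\ $G=K_5-e$, whereas $|W|\ge 4$ always leaves a $W$-edge with a single witness, hence a contractible edge. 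Granting the lemma, the induction closes: contracting the located edge and passing to a minimal Ramsey subgraph $G'$ gives $G'\in\mathcal{M}(\mathcal{C})$ with $G'\prec G$ and $v(G')<v(G)$, and splicing the chain furnished for $G'$ (which ends in a base graph) in front of $G$ produces the required $G_0\prec\cdots\prec G_n=G$.
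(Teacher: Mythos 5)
Your reduction framework is sound, and in places cleaner than the paper's own: reading Proposition~\ref{prop:NW} as a $(2,2)$-sparsity (circuit) condition, the bookkeeping $e(G/f)=2v(G/f)-t$ legitimately replaces the paper's colouring argument (Lemma~\ref{contract}) for the contraction step --- the paper itself remarks that this computational route is available --- and your derivations that $\delta(G)=3$, that at least two vertices have degree exactly $3$, that $G$ is $2$-connected, and that a degree-$3$ vertex whose neighbourhood is not a triangle is incident to an edge lying in at most one triangle, are all correct and mirror Lemma~\ref{lem:elem} and the opening observations in the proof of Lemma~\ref{attached}. The induction scaffolding (contract, pass to a minimal Ramsey subgraph, recurse on fewer vertices) is also fine.

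The genuine gap is that the entire difficulty of the theorem sits inside your ``crux lemma'' in the case where every degree-$3$ vertex lies in a $K_4$, and there your argument stops at ``Pushing this through, I expect\dots''. This unproved claim is essentially equivalent to the paper's Lemma~\ref{attached}, whose proof takes over a page: an induction on $v(G)$ distinguishing strongly and weakly attached copies of $K_4$, plus separate ad hoc treatment of the cases $v(G)=7,8,9$ (the case $v(G)=7$ alone requires several paragraphs of case analysis). Moreover, the specific ``short extremal count'' you propose cannot close the case $|W|\ge 4$ on its own: the witness supply--demand inequality $3t_W+3S+2M\ge 2e(G[W])$ is \emph{not} violated for $|W|\ge 4$ in general, because copies of $K_4$ inside $G[W]$ supply two internal witnesses to each of their edges. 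For instance, $G[W]$ equal to two $K_4$'s sharing a vertex ($|W|=7$, $e(G[W])=12=2|W|-2$) satisfies the witness condition and every sparsity bound; what excludes it is only the combination with the identity $e(G[W])=2|W|-1-S-M$ (which here allows a single attachment, $S+M=1$) and the requirement $\deg_G(w)\ge 4$ for the six non-central vertices of $W$. So the contradiction for general $|W|\ge 4$ must be extracted from the simultaneous interplay of witnesses, degrees, the density identity, subgraph sparsity and $2$-connectivity, and neither your count nor an obvious refinement of it does this; your verifications for $|W|=2,3$ (which do correctly yield $K_4\vee K_4$ and $K_5-e$) are the easy end of the analysis. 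As written, the proposal proves the theorem modulo an unproved structural statement that carries the same weight as the paper's key lemma.
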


As we shall show, the contraction of an edge, which is in at most one triangle, preserves the Ramsey property of a Ramsey-graph for $\mathcal{C}$, whence a minimal Ramsey-subgraph can be found. The theorem guarantees that continuing the reduction in this way necessarily results in $K_5-e$ or $K_4\vee K_4$. By combining \ref{prop:NW} with \ref{thm:first} we therefore obtain:

\begin{cor}
Every graph $G$ with $e(G)\geq 2v(G)-1$ contains one of $K_5-e$, $K_4\vee K_4$ as a minor.
\end{cor}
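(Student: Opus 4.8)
The plan is to chain together the two headline results, the key observation being that the hypothesis $e(G)\geq 2v(G)-1$ is exactly the local-density condition of Proposition~\ref{prop:NW} applied to the whole of $G$. So the corollary should fall out as a one-line translation of Theorem~\ref{thm:first} into the language of minors, with essentially no independent content of its own.

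First I would rewrite the density inequality so that the hypothesis becomes recognisable. The condition $\frac{e(H)-1}{v(H)-1}\geq 2$ appearing in Proposition~\ref{prop:NW} is equivalent to $e(H)\geq 2v(H)-1$. Taking $H=G$, the assumed bound $e(G)\geq 2v(G)-1$ therefore exhibits a subgraph of $G$ (namely $G$ itself) meeting the density threshold, and Proposition~\ref{prop:NW} gives $G\in\mathcal{R}(\mathcal{C})$. Next I would descend to a minimal Ramsey subgraph: since any finite graph with the Ramsey property contains a subgraph-minimal element still possessing it, there is some $G'\in\mathcal{M}(\mathcal{C})$ with $G'\subseteq G$. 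Theorem~\ref{thm:first} then furnishes a minor chain $G_0\prec G_1\prec\cdots\prec G_n=G'$ with $G_0\in\{K_5-e,\,K_4\vee K_4\}$, so in particular $G_0\prec G'$.

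Finally I would close the argument by transitivity of the minor relation. A subgraph is in particular a minor, so $G'\subseteq G$ yields $G'\prec G$; combined with $G_0\prec G'$ this gives $G_0\prec G$, i.e.\ $G$ has $K_5-e$ or $K_4\vee K_4$ as a minor, as required.

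I do not expect a genuine obstacle at the level of the corollary: all the combinatorial substance is already encapsulated in Theorem~\ref{thm:first}, and the remaining steps—the algebraic reformulation of the density bound, the existence of a minimal Ramsey subgraph, and the transitivity of $\prec$—are routine. If anything warrants a word of care, it is simply recording that $\mathcal{R}(\mathcal{C})$ is closed under taking supergraphs (restricting a colouring of a larger graph to $G$ shows this at once), which is what guarantees that a minimal member $G'$ sits inside $G$ in the first place.
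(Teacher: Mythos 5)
Your proof is correct and is precisely the argument the paper intends: it deduces $G\in\mathcal{R}(\mathcal{C})$ from Proposition~\ref{prop:NW} (the density rewriting is exact), passes to a minimal Ramsey subgraph $G'\in\mathcal{M}(\mathcal{C})$, applies Theorem~\ref{thm:first} to get a base graph as a minor of $G'$, and finishes by transitivity of the minor relation. The paper leaves these routine steps implicit (``By combining \ref{prop:NW} with \ref{thm:first} we therefore obtain''), so your write-up simply makes the same chain explicit.
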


Upon reinterpretation of Theorem \ref{thm:first}, every $G\in\mathcal{M}(\mathcal{C})$ can be obtained by starting with one of the two base graphs by recursively splitting a vertex of a suitable supergraph. A concrete description of the process would result in an algorithm constructing all minimal Ramsey-graphs for $\mathcal{C}$. Traditionally, for graphs $H$ such extensions were done by means of \emph{signal senders}, i.e. non-Ramsey graphs $G$ with two special edges $e$ and $f$, which attain same (respectively distinct) colours in every $H$-free colouring, which were then use to establish infiniteness of $\mathcal{M}(H)$ and much more, see e.g.~\cite{BEL} and~\cite{BNR}. However, it follows from an extension of Theorem \ref{thm:NW} by Reiher and Sauermann~\cite{RS} that no (positive) signal senders for $\mathcal{C}$ can exist: indeed, given a graph $G$ that edge-decomposes into two forests, for any choice of $e$ and $f$ one finds an edge-decomposition with $e$ and $f$ belonging to different colour classes. Instead, one may prove infiniteness for $\mathcal{M}(\mathcal{C})$ by noting (by an argument similar to that in~\cite{ADOV}) that a $4$-regular graph of girth $g$ (which is known to exist by~\cite{ESA}) must contain a minimal Ramsey graph for cyclicity, where the monochromatic cycles are of length $\geq g$.

Our second result provides a much simpler way to make progress towards this aim by describing three entirely constructive ways to enlarge a graph in $\mathcal{M}(\mathcal{C})$ that allow to track its structure; note that the first increases the number of vertices by $1$, while the other two increase it by $2$.

\begin{thm}\label{thm:second}
If $G\in\mathcal{M}(\mathcal{C})$, then also $G^*\in\mathcal{M}(\mathcal{C})$, where $G^*$ is a larger graph obtained from $G$ by applying one of the following three constructions:
\begin{enumerate}
\item Given a $2$-path $uvw$ in $G$, do the following: Introduce a new vertex $x$. Join $x$ to each of $u, v$ and $w$. Then delete edge $vw$.
\item Given an edge $vw$ in $G$, do the following: Introduce a new vertex $x$. Join $x$ to both $v$ and $w$. Then apply construction (1) to the $2$-path $xvw$.
\item Given a $2$-path $uvw$ in $G$, do the following: apply construction (1) to $uvw$ and $wvu$ at the same time, that is: Introduce new vertices $x, y$. Join both $x, y$ to each of $u, v, w$. Then delete edges $uv$ and $vw$.
\end{enumerate}
\end{thm}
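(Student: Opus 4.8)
The plan is to argue entirely through the local density criterion of Proposition~\ref{prop:NW}. For a graph $H$ set $\phi(H):=2v(H)-e(H)$; then the case $r=2$ of that proposition says that $G\in\mathcal{M}(\mathcal{C})$ if and only if $\phi(G)=1$ while $\phi(H)\ge 2$ for every proper subgraph $H\subsetneq G$ on at least two vertices. The merit of this reformulation is that $\phi$ is additive under the relevant moves: attaching a new vertex by $d$ edges changes $\phi$ by $2-d$, and deleting an edge raises $\phi$ by $1$. The easy half is then immediate: each construction adds $t$ new vertices --- $t=1$ for (1), $t=2$ for (2) and (3) --- and a net of exactly $2t$ new edges (namely $3-1$, $5-1$, and $6-2$), so $\phi(G^*)=\phi(G)+2t-2t=1$ and hence $G^*\in\mathcal{R}(\mathcal{C})$. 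The whole theorem thus reduces to the minimality bound $\phi(H)\ge 2$ for every proper $H\subsetneq G^*$.

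To prove it, I would fix a proper $H\subsetneq G^*$, let $N$ be the set of new vertices it contains, and let $H':=H-N$ be its restriction to $V(G)$. Every edge of $G^*$ outside $G$ is incident to a new vertex, and the constructions only ever delete edges of $G$; hence $H'$ is a subgraph of $G$ that omits all the deleted edges, so $H'$ is a \emph{proper} subgraph of $G$ and $\phi(H')\ge 2$ (the cases $v(H')\le 1$ being trivial, as then $H$ spans at most three vertices). Writing $k$ for the number of edges of $H$ incident to $N$, additivity yields $\phi(H)=\phi(H')+2|N|-k$. When $k\le 2|N|$ this already gives $\phi(H)\ge\phi(H')\ge 2$, so the only danger is when $H$ is edge-rich at the new vertices.

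The genuine obstacle is the extreme case in which $H$ contains \emph{all} of the new edges; this forces every new vertex and all of the anchor vertices ($u,v,w$ for (1) and (3), and $v,w$ for (2)) to lie in $H$. Here I would reinstate the $d$ deleted edges inside $H'$, whose endpoints are now present, to form a graph $H''\subseteq G$. A direct count shows $\phi(H'')=\phi(H')-d=\phi(H)$, and by construction $H=G^*$ holds precisely when $H''=G$. Since $H$ is a proper subgraph of $G^*$ we have $H''\neq G$, so $H''$ is a proper subgraph of $G$ and therefore $\phi(H)=\phi(H'')\ge 2$. The remaining intermediate values of $k$ (essentially only construction~(3) one edge short of saturation) are settled the same way but more cheaply: there all of $u,v,w$ still lie in $H'$, so reinstating $uv$ and $vw$ produces a subgraph of $G$, whence $\phi(H')\ge 3$ using merely that every subgraph of $G$ has potential at least $1$, and this suffices. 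I expect this saturation case --- the realization that filling the new gadget completely can only recreate $G^*$ itself --- to be the one real point of the argument; everything else is bookkeeping with $\phi$.
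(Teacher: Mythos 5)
Your proof is correct, and for two of the three constructions it takes a genuinely different route from the paper's. The paper splits the theorem into three lemmas: construction (1) is proved, as you do, via the density criterion of Proposition~\ref{prop:NW}, but constructions (2) and (3) are proved by explicit colouring (``graph-chasing'') arguments --- for (2) the paper establishes a more general claim, namely that parallel composition of $G-e$ with \emph{any} edge-minimal gadget $D$ having two non-adjacent contact vertices joined by a monochromatic path in every cycle-free $2$-colouring of $D$ again yields a member of $\mathcal{M}(\mathcal{C})$, and for (3) it exhibits, for each possible deleted edge, a cycle-free colouring of the remaining graph. The paper itself remarks that the density route would also work for (2); what its colouring proofs buy is generality and explicit colour-extension information that is reused later in the proof of Theorem~\ref{thm:third}, whereas your unified potential-function argument buys brevity and uniformity: the identity $\phi(H)=\phi(H')+2|N|-k$ together with the saturation observation (that $H''=G$ exactly when $H=G^*$) disposes of all three constructions at once. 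Notably, your careful treatment of the saturated case also repairs a sloppy point in the paper's own proof of (1), which asserts that a proper subgraph using the new vertex ``uses at most two new edges'', thereby skipping precisely the case $k=3$ that you identify as the crux. One minor inaccuracy on your side: the intermediate danger cases are not only ``construction (3) one edge short of saturation'' --- in construction (3) a subgraph may contain just one new vertex together with all three of its edges ($|N|=1$, $k=3>2|N|$) --- but your cheap argument applies there verbatim (all of $u,v,w$ lie in $H'$, reinstating $uv$ and $vw$ gives $\phi(H')\ge 3$, hence $\phi(H)=\phi(H')-1\ge 2$), so this is purely an expository slip, not a gap.
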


Note that one has $\chi(G)\leq 4$ for every graph $G\in\mathcal{M}(\mathcal{C})$ or, more generally $\chi(G)\leq 2r$ for every graph $G\in\mathcal{M}_r(\mathcal{C})$. Indeed, any $n$-vertex graph $G\in\mathcal{M}_r(\mathcal{C})$ contains a subgraph $H$ with $\delta(H)\geq\chi(G)-1$, which at the same time satisfies $\delta(H)\leq d(H)\leq\frac{2[r(n-1)+1]}{n}<2r$, where $d(H)$ denotes the average degree of $H$. Our Theorem \ref{thm:second} now implies:

\begin{cor}\label{cor:inf}
Each of the three partition classes of $\mathcal{M}(\mathcal{C})$ corresponding to chromatic number $\chi = 2, 3, 4$, respectively, consists of infinitely many pairwise non-isomorphic graphs.
\end{cor}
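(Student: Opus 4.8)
The plan is to treat the three chromatic classes separately, using two structural observations together with the enlargement constructions of Theorem~\ref{thm:second}. The first observation is that every member of $\mathcal{M}(\mathcal{C})$ has minimum degree at least $3$: if some vertex had degree at most $2$, deleting it would leave a proper subgraph $H$ with $e(H)\ge (2v(G)-1)-2 = 2v(H)-1$, which is already Ramsey for $\mathcal{C}$ by Proposition~\ref{prop:NW}, contradicting minimality. The second observation is that each construction strictly increases the order while, by Theorem~\ref{thm:second}, staying inside $\mathcal{M}(\mathcal{C})$; so iterating any one of them produces infinitely many pairwise non-isomorphic members, and the only real task is to pin the chromatic number. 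Here the minimum-degree fact forces an asymmetry: applied to a bipartite member (whose chosen $2$-path still has a middle vertex of degree $\ge 3$), each construction creates an odd cycle and so destroys bipartiteness --- construction~(1) the triangle on $x,u,v$, construction~(2) a copy of $K_4-e$, and construction~(3) an odd closed walk obtained by joining $x$ to $u$ and to a surviving neighbour of $v$ lying in $u$'s part. Consequently the class $\chi=2$ cannot be grown from within by Theorem~\ref{thm:second} and must be produced by a separate device.

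For $\chi=4$ I would start from $K_5-e$, fix one of its copies of $K_4$, and repeatedly apply construction~(2) to an edge lying outside this fixed $K_4$ (such an edge always exists, since each step adds new gadget edges disjoint from the clique). The retained $K_4$ gives $\chi(G^*)\ge 4$, while any $4$-colouring of $G$ extends over the two new vertices, which see only the two endpoints of the deleted edge and each other and hence at most three colours; thus $\chi(G^*)=4$. For $\chi=3$ I would maintain the invariant that $G$ carries a fixed triangle $T$ together with a $2$-path $uvw$ with $uw\notin E(G)$, disjoint in its edges from $T$, admitting a $3$-colouring with $c(u)=c(w)$. A concrete seed is $K_1\vee K_{2,3}$, the graph obtained from one application of construction~(1) to $K_5-e$, where two non-adjacent vertices of the size-$3$ side serve as $u,w$ and a common neighbour as $v$. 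Applying construction~(3) to $uvw$ keeps $T$ (so $\chi\ge 3$), and the colouring with $c(u)=c(w)=\alpha$, $c(v)=\beta$ extends by giving both new vertices the third colour (they are non-adjacent and see only $\alpha,\beta$), so $\chi(G^*)=3$; the two new vertices then form a fresh pair $u',w'$ with $u'w'\notin E$ and equal colour, regenerating the invariant so that the iteration continues indefinitely.

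For $\chi=2$ I would abandon the constructions and invoke the high-girth argument already recorded in the introduction: a $4$-regular graph of girth $g$ is Ramsey for $\mathcal{C}$ and hence contains a minimal Ramsey subgraph, all of whose cycles have length $\ge g$. Taking the host graphs to be \emph{bipartite} $4$-regular graphs of girth $g$ (which exist for every $g$), the extracted minimal Ramsey graphs are bipartite, hence of chromatic number exactly $2$ since they contain a cycle, and their orders tend to infinity with $g$; this produces infinitely many pairwise non-isomorphic members with $\chi=2$.

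The main obstacle I anticipate is the chromatic \emph{upper} bound under the constructions: the lower bounds come for free from a retained clique, but keeping $\chi\le 4$ (resp.\ $\le 3$) requires choosing the construction and a local colouring flexibility that survives iteration, which is exactly what the fixed-$K_4$ choice and the ``flexible $2$-path'' invariant are designed to guarantee. The second, more conceptual, point to get right is the realization --- forced by $\delta\ge 3$ --- that the class $\chi=2$ lies outside the reach of Theorem~\ref{thm:second} and must be handled by the separate bipartite high-girth construction.
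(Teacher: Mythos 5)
Your proposal is correct, but it diverges from the paper's proof in an instructive way. For $\chi=4$ you do essentially what the paper does (iterate construction (2) on an edge avoiding a fixed $K_4$ of $K_5-e$), and for $\chi=3$ your invariant-based iteration of construction (3) starting from $K_1\vee K_{2,3}$ is a sound variant of the paper's simpler device, which is to replace \emph{every} edge of a member of $\mathcal{M}(\mathcal{C})$ by a diamond: this creates triangles, while colouring the original vertices with one colour and the two new vertices of each diamond with the two remaining colours gives an explicit proper $3$-colouring. The real divergence is at $\chi=2$, and there your structural claim that this class ``lies outside the reach of Theorem \ref{thm:second}'' is false: you are right that a \emph{single} application of any one construction to a bipartite member destroys bipartiteness, but the paper grows the bipartite class from within by composing two applications of construction (3) --- apply it to a path $uvw$ to get new vertices $x,y$, then apply it again to the path $xvy$; every odd cycle created by the first application must use one of the edges $xv$, $yv$, and these are exactly the edges the second application deletes, so the result is again bipartite (put $x,y$ in $v$'s class and the newest vertices $x',y'$ in the other class), and iterating from $K_{3,5}$ yields the infinite family. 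Your substitute --- extracting minimal Ramsey subgraphs from bipartite $4$-regular graphs of girth $g$ --- is nevertheless valid: such hosts satisfy $\frac{e-1}{v-1}>2$, hence are Ramsey for $\mathcal{C}$ by Proposition \ref{prop:NW}; any minimal Ramsey subgraph is bipartite, contains a cycle, has all cycles of length at least $g$, and unbounded girth together with $\delta\geq 3$ forces unbounded order. The trade-off: the paper's route is fully explicit and keeps the corollary a genuine consequence of Theorem \ref{thm:second}, as advertised in the text; yours imports a nonconstructive existence statement (a bipartite Erd\H{o}s--Sachs-type theorem, which the paper cites only in its non-bipartite form), but in exchange proves something stronger, namely that $\mathcal{M}(\mathcal{C})$ contains bipartite members of arbitrarily large girth.
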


In fact, since our first two constructions can be seen to preserve planarity, infinitely many of the above graphs with $\chi = 2, 3$ can be chosen planar each. On the other hand, the smallest bipartite graph $G\in\mathcal{M}(\mathcal{C})$ is already $K_{3, 5}$ (obtained as $K_5-e\longrightarrow (K_{2, 3})^+\longrightarrow (K_{2, 4})^+\longrightarrow K_{3, 5})$. Since $e(G)>2v(G)-4$, any such must be non-planar.\\

Note that the fact that $\chi(G)\leq 4$ for $G\in\mathcal{M}(\mathcal{C})$ is much unlike the situation for graphs $G\in\mathcal{M}(H)$ for $H=K_3$ or $H$ $3$-connected, where $\chi(G)$ becomes arbitrarily large (see~\cite{BNR}) and hence so does $\Delta(G)$. Despite the boundedness of $\chi(G)$ we are still able to show:

\begin{cor}\label{cor:delta}
For every $\Delta\geq 1$ there exists $G\in\mathcal{M}(\mathcal{C})$ with $\Delta (G)\geq\Delta$.
\end{cor}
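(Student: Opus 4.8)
The plan is to start from a base graph and repeatedly apply construction~(1) of Theorem~\ref{thm:second} so as to pump up the degree of a single fixed vertex, each application raising it by exactly one while keeping us inside $\mathcal{M}(\mathcal{C})$.

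First I would record a minimum-degree bound: every $G\in\mathcal{M}(\mathcal{C})$ on $n\geq 3$ vertices satisfies $\delta(G)\geq 3$. Indeed, for any vertex $v$ the induced subgraph $G-v$ is a proper subgraph with $v(G-v)=n-1\geq 2$, so the minimality clause of Proposition~\ref{prop:NW} gives $e(G-v)\leq 2(n-1)-2$; combined with the equality $e(G)=2n-1$ (also from Proposition~\ref{prop:NW}) this yields $\deg(v)=e(G)-e(G-v)\geq 3$. Since all graphs occurring below arise from $K_5-e$ by constructions that only add vertices, they always have $n\geq 5$, so this bound applies throughout.

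Next I would analyse the effect of construction~(1) on degrees. Applied to a $2$-path $uvw$, it adds a vertex $x$ joined to $u,v,w$ and deletes the edge $vw$; hence $\deg(u)$ increases by exactly one (the new edge $ux$, with no edge at $u$ removed), while $\deg(v)$ and $\deg(w)$ remain unchanged and $x$ has degree $3$. By Theorem~\ref{thm:second} the resulting graph $G^*$ again lies in $\mathcal{M}(\mathcal{C})$.

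The only remaining point is to guarantee that the iteration never gets stuck, that is, that a $2$-path beginning at our chosen vertex $u$ always exists. This is exactly where the minimum-degree bound is used: since $\deg(u)\geq 3$ the vertex $u$ has a neighbour $v$, and since $\deg(v)\geq 3$ as well, $v$ has a neighbour $w\neq u$, so $uvw$ is a valid $2$-path. Therefore, fixing a vertex $u$ of $K_5-e$, one may apply construction~(1) to a $2$-path beginning at $u$ repeatedly; after $k$ steps the fixed vertex has degree $\deg_0(u)+k$ and all intermediate graphs lie in $\mathcal{M}(\mathcal{C})$. Taking $k=\Delta$ produces a graph $G\in\mathcal{M}(\mathcal{C})$ with $\Delta(G)\geq\deg(u)\geq\Delta$, as required. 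No step presents a serious obstacle; the only thing needing care is the bookkeeping that construction~(1) raises $\deg(u)$ rather than leaving it fixed, which is why the $2$-path must be oriented so that $u$ is the endpoint \emph{not} incident to the deleted edge.
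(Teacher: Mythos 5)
Your proof is correct, but it takes a genuinely different and more elementary route than the paper. The paper does not prove Corollary~\ref{cor:delta} directly: it derives it as an immediate special case of Theorem~\ref{thm:third}, noting that forests of cycles have arbitrarily large maximum degree (e.g.\ many triangles sharing a common vertex) and that Theorem~\ref{thm:third} embeds any such forest as a subgraph of some $G\in\mathcal{M}(\mathcal{C})$, whence $\Delta(G)\geq\Delta$. You instead iterate construction~(1) of Theorem~\ref{thm:second} on a $2$-path anchored at a fixed vertex $u$. Your bookkeeping is right: the deleted edge $vw$ is not incident to $u$, so $\deg(u)$ grows by exactly one per application, while Lemma~\ref{lem:path} keeps every intermediate graph in $\mathcal{M}(\mathcal{C})$; your re-derivation of $\delta\geq 3$ from Proposition~\ref{prop:NW} is valid (you could equally cite Lemma~\ref{lem:elem}, which gives $\delta(G)\geq r+1=3$ for $r=2$), and that bound is exactly what guarantees the required $2$-path $uvw$ exists at every step. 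What your approach buys is self-containedness: it needs only one of the three constructions of Theorem~\ref{thm:second} plus an elementary degree bound, and avoids the five-step induction behind Theorem~\ref{thm:third} entirely. What the paper's approach buys is strength: Theorem~\ref{thm:third} controls not just one vertex degree but an entire prescribed forest of cycles together with the colouring-extension property, which is what also powers Corollary~\ref{cor:equiv}; the maximum-degree statement then falls out for free as a one-line consequence.
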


Indeed, Corollary \ref{cor:delta} is a special case of a much more general theorem, which as an exhaustive application of \ref{thm:second} asserts that the structure of $\mathcal{M}(\mathcal{C})$ is actually quite rich.

By a \emph{forest of cycles} we refer to a graph $F$ obtained, with disregard to isolated vertices, by starting with a cycle and then recursively adjoining a further cycle by identifying at most one of its vertices with a vertex on already existing cycles. Clearly there are forests of cycles of arbitrarily large maximum degree. Note that thanks to every edge of $F$ belonging to precisely one cycle, we can $2$-edge-colour a forest of cycles $F$ in such a way that every cycle in $F$ is monochromatic while choosing each cycle's colour independently of that of any other cycle. Call any such colouring \emph{cycle-monochromatic}.

\begin{thm}\label{thm:third}
For every forest of cycles $F$ and every integer $n\geq 5$ satisfying $n\geq \left|F\right|$ there exists $G\in\mathcal{M}(\mathcal{C})$ with the following properties:
\begin{enumerate}
\item $\left|G\right|=n$
\item $F$ is a subgraph of $G$
\item Every cycle-monochromatic $2$-edge-colouring of $F$ extends to a $2$-edge-colouring of $G$, in which there are no monochromatic cycles other than those already in $F$.
\end{enumerate}
\end{thm}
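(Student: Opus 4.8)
The plan is to construct $G$ using only the three operations of Theorem~\ref{thm:second}, so that the relation $G\in\mathcal{M}(\mathcal{C})$ is automatic at every step and only the subgraph condition~(2) and the colour-extension condition~(3) have to be tracked. I would work in two phases: a \emph{building phase} that realises the entire forest of cycles $F$ inside a minimal Ramsey graph of the least admissible order $n_{0}:=\max\{5,|F|\}$, followed by a \emph{padding phase} that raises the order one vertex at a time to an arbitrary $n\ge n_{0}$ via construction~(1). The central point is that for each operation there is a \emph{local} colouring recipe that reacts only to the colour of the single cycle it creates or enlarges; consequently one fixed graph $G$ will satisfy~(3) \emph{simultaneously} for all $2^{c}$ cycle-monochromatic colourings of $F$, where $c$ is the number of cycles.

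For the building phase I would exploit the fact that $K_{5}-e$ already contains a \emph{bowtie}, i.e.\ two triangles meeting in a single vertex, and use all five of its vertices to seed two cycles of $F$ that share a vertex (when $c=1$ a single cycle of $K_{5}-e$ is the seed; for disconnected $F$ the components are realised analogously and linked by non-$F$ edges, graphs in $\mathcal{M}(\mathcal{C})$ being connected). I then bring each seeded triangle to its prescribed length by repeatedly applying construction~(1) to a $2$-path $uvw$ lying on the current cycle: deleting the middle edge $vw$ and rerouting through the new vertex via $vx,xw$ lengthens the cycle by one and leaves the edge $xu$ as a chord outside $F$. Each remaining cycle is attached at its prescribed vertex $v$ by applying construction~(2) to an edge $vw$ with $vw\notin E(F)$; the resulting $K_{4}-e$ gadget carries the triangle $vxy$ on the two new vertices, which meets the rest of $F$ only in $v$, and which I then lengthen as above. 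Since the bowtie pays for two cycles with five vertices, each lengthening costs exactly one vertex, and each attached triangle exactly two, a short count shows that this process terminates at order $n_{0}$.

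The colour extensions are the heart of the argument and I would check them operation by operation. When a cycle $Z$ of prescribed colour $c_{Z}$ is lengthened, I colour the two new cycle-edges $vx,xw$ with $c_{Z}$ and the chord $xu$ with the opposite colour: the enlarged cycle stays monochromatic, the spurious triangle $uvx$ is destroyed, and $x$ becomes a pendant in the opposite colour, so no unintended monochromatic cycle is born. When a cycle of prescribed colour $c$ is attached, I colour the triangle $vxy$ with $c$ and the two remaining gadget-edges $wx,wy$ with the opposite colour; then $x,y$ form a single pendant cycle at $v$ in colour $c$ and mere pendants at $w$ in the other colour, contributing exactly the one intended monochromatic cycle. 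In the padding phase I apply construction~(1) to a $2$-path $uvw$ \emph{both} of whose edges avoid $F$; using that the colouring already in hand is good and that $uv,vw\notin E(F)$, one sees that $u,v,w$ cannot be joined in both colours at once, whence one of the two symmetric colourings of $xu,xv,xw$ closes no monochromatic cycle through $x$.

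I expect the genuine difficulty to lie in two places. The first is reaching the \emph{exact} order $n_{0}$ rather than merely some order above it: building a forest of many short cycles naively from a five-vertex base overshoots, and the remedy is precisely the bowtie device above, which lets the base graph account for two cycles instead of one; the disconnected case and the vertex bookkeeping it entails require the same idea applied componentwise, together with auxiliary attachment vertices. The second, and more conceptual, difficulty is forcing~(3) to hold for \emph{every} cycle-monochromatic colouring at once; this is exactly what dictates that the recipes above be local, each honouring only the colour of the cycle it touches, so that the independent colour choices on the cycles of $F$ can be met independently on $G$. What then remains is routine: the finite colour bookkeeping for the seed bowtie inside $K_{5}-e$ (its four colourings), and the verification that the low-degree vertices introduced by the constructions never accumulate enough equally-coloured edges to close a cycle not already present in $F$.
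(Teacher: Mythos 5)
Your overall plan---grow $G$ out of $K_5-e$ by the operations of Theorem~\ref{thm:second}, with a local colouring recipe per operation, and then pad up to order $n$ with construction (1)---is the same as the paper's, and your recipes for lengthening a cycle and for attaching a pendant triangle via construction (2) on an edge $vw\notin E(F)$ are exactly the paper's Steps 3 and 4. But your building phase has a genuine gap: you have no mechanism for creating a new cycle at a vertex $v$ \emph{all} of whose incident edges in the current graph already lie in $F$. Your only attachment device, construction (2) applied to an edge $vw\notin E(F)$, consumes one non-$F$ edge at $v$ and adds only $F$-edges there; lengthening a cycle never creates a non-$F$ edge at the two inner vertices of the $2$-path it uses; and degrees in $K_5-e$ are at most $4$. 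Consequently the number of cycles of $F$ your process can ever place through a fixed vertex is at most $3$, and only $2$ at the centre of your seed bowtie (all four of whose edges lie in $F$ from the outset). A forest of cycles, however, may have a vertex lying on arbitrarily many cycles: the friendship graph ($k$ triangles sharing one vertex, $\left|F\right|=2k+1$) is a legitimate forest of cycles, and for $k\geq 3$ your construction cannot realize it at all at the exact order $n=\left|F\right|$ (seeding a single triangle instead of the bowtie still caps you at $3$ triangles and overshoots the order by $2$). The paper closes precisely this hole with a device you are missing (its Step 5, ``creating new branches''): at a branch vertex $u$, take an existing leaf triangle $uvw$ of $F$ and apply construction (1) \emph{twice}, first to $u$-$v$-$w$ (deleting $vw$, creating triangles $uvx$ and $uwx$), then to $u$-$v$-$x$ (deleting $vx$, creating $uvx'$ plus the spare edge $xx'$). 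This replaces one triangle at $u$ by two edge-disjoint triangles meeting only at $u$, costs exactly $2$ vertices, and needs no spare edge at $u$; the colouring extends by giving $xx'$ a colour opposite to those of the two new triangles. Without this (or an equivalent) mechanism your induction does not go through.

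Two smaller points in the same spirit. For a single long cycle you must seed the Hamiltonian $C_5$ of $K_5-e$ rather than a triangle, or you again overshoot the order by $2$; and ``linking components by non-$F$ edges'' for disconnected $F$ is not a construction---the paper first creates a new-space vertex $x\notin F$ and then applies construction (2) to an edge at $x$, so that the new triangle is disjoint from $F$, at a cost of $3$ vertices. Finally, your padding argument asserts that ``$u,v,w$ cannot be joined in both colours at once,'' which is not the statement you need and is not justified; the correct observation is sharper and simpler: in $G-vw$ there can be no monochromatic $v$--$w$ path in the colour of $vw$, since such a path together with the edge $vw$ would be a monochromatic cycle of $G$ through a non-$F$ edge. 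One then colours $xv,xw$ with the colour of $vw$ and $xu$ with the other colour, and no new monochromatic cycle through $x$ can arise.
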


Note that the condition $n\geq \left|F\right|$ could be replaced by $n=\left|F\right|$ if the definition of a forest of cycles were relaxed so as to allow isolated vertices, but this variant would somewhat undermine the strength of the statement.

Since, as is quickly seen, a forest of cycles $F$ on $n$ (non-isolated) vertices contains between $n$ and $\frac{3}{2}(n-1)$ edges, Theorem \ref{thm:third} also guarantees that any such $F$ ($n\geq 5$) extends to some $G\in\mathcal{M}(\mathcal{C})$ with $F$ as a spanning subgraph by adding only $k$ edges, where $\frac{1}{2}(v(F)+1)\leq k\leq v(F)-1$. Finally, we remark on a second corollary of \ref{thm:third}.

%wlog enough to restrict to finite
%\textsl{Organization.} In the following section we provide the specifics to the aforementioned results. In the subsequent section we provide the proofs for our results, all of which are constructive and self-contained. In the concluding remarks we then discuss a number of natural generalizations and related questions.
%(TODO Finally, in the appendix we collect the more elementary arguments supporting a number of supplementary observations used throughout)
%Quick computation show that e.g. $K_5$, $K_{3, 5}, K_{4, 4}\in\mathcal{R}(\mathcal{C})$

\begin{cor}\label{cor:equiv}
For all $l \geq 4$ the family $\{C_3,\ldots, C_l\}$ is not Ramsey-equivalent to any proper subfamily of itself, that is, for every proper $\mathcal{F}\subset\{C_3,\ldots, C_l\}$ there exists a (minimal) Ramsey-graph for $\{C_3,\ldots, C_l\}$, which is not a Ramsey-graph for $\mathcal{F}$.
\end{cor}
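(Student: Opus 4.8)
\emph{Setup and reduction.} The plan is to reduce to a single-cycle \emph{necessity} claim and then realise it via Theorem \ref{thm:third}. Since a monochromatic copy of a member of a subfamily is in particular a monochromatic member of $\{C_3,\dots,C_l\}$, we have $\mathcal R(\mathcal F)\subseteq\mathcal R(\{C_3,\dots,C_l\})$ for every $\mathcal F$; so it suffices, for each proper $\mathcal F$, to exhibit a graph in $\mathcal R(\{C_3,\dots,C_l\})\setminus\mathcal R(\mathcal F)$. Fixing $C_m\in\{C_3,\dots,C_l\}\setminus\mathcal F$, I would hunt for $G\in\mathcal R(\{C_3,\dots,C_l\})$ carrying a $2$-edge-colouring with no monochromatic member of $\mathcal F$. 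The parenthetical \emph{minimal} is then free: pass to a minimal Ramsey subgraph $G'\subseteq G$ for $\{C_3,\dots,C_l\}$ and restrict the colouring, which can only lose monochromatic cycles, so $G'$ still avoids every member of $\mathcal F$.

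\emph{The generic range $l\ge5$.} Here I would feed the one-cycle ``forest'' $F=C_m$ into Theorem \ref{thm:third} with $n:=\max(5,m)$, which is legitimate since $m\le l$ and $l\ge5$ force $n\le l$. This returns $G\in\mathcal M(\mathcal C)$ with $|G|=n\le l$ for which the (cycle-monochromatic) all-red colouring of $C_m$ extends to a $2$-colouring of $G$ whose only monochromatic cycle is $C_m$. As $|G|\le l$, every cycle of $G$—a fortiori every monochromatic cycle under any colouring—has length in $\{3,\dots,l\}$; combined with $G\in\mathcal R(\mathcal C)$ this upgrades $G$ to a member of $\mathcal R(\{C_3,\dots,C_l\})$. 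The exhibited colouring has $C_m$ as its sole monochromatic cycle, hence no monochromatic member of $\mathcal F$, settling every proper $\mathcal F$ at once.

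\emph{The exceptional value $l=4$.} This case is genuinely different: by the Erd\H{o}s--Gallai circumference bound a graph with $e\ge 2v-1$, hence by Proposition \ref{prop:NW} any graph in $\mathcal R(\mathcal C)$, must contain a cycle of length $\ge5$; so no Ramsey graph for $\mathcal C$ has all its cycles of length $\le4$, and the length argument collapses. Here I would treat the two maximal proper subfamilies by hand. To show $C_4$ necessary (separating $\{C_3,C_4\}$ from $\{C_3\}$) take a triangle-free graph that is Ramsey for $C_4$: a large $K_{n,n}$ is bipartite, so it never yields a monochromatic triangle, yet it forces a monochromatic $C_4$ by the K\H{o}v\'ari--S\'os--Tur\'an bound. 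To show $C_3$ necessary (separating $\{C_3,C_4\}$ from $\{C_4\}$) I need a graph in $\mathcal R(\{C_3,C_4\})$ admitting a $2$-colouring with no monochromatic $C_4$, i.e.\ a $\{C_3,C_4\}$-Ramsey graph whose edges split into two $C_4$-free graphs.

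\emph{The main obstacle.} This last construction is where the work lies. The naive attempt—a $C_4$-free triangle-Ramsey graph—cannot exist: $C_4$-freeness makes any two triangles edge-disjoint, so colouring each triangle non-monochromatically (independently) destroys every monochromatic triangle, whence no $C_4$-free graph is Ramsey for $C_3$. Edge-counting is no help either, since $\mathrm{ex}(n;\{C_3,C_4\})$ and $\mathrm{ex}(n;C_4)$ are both of order $n^{3/2}$, placing ``$\{C_3,C_4\}$-Ramsey'' and ``$2$-colourable with no monochromatic $C_4$'' on the same density scale. The separation must therefore come from a dedicated structured graph supplied together with an explicit partition of its edges into two $C_4$-free subgraphs; producing and verifying such an example is the real content of the $l=4$ case, while every remaining case follows cleanly from the circumference argument above.
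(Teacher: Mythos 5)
Your argument for $l\ge 5$ is correct and is exactly the (implicit) derivation the paper intends: feed $F=C_m$ into Theorem \ref{thm:third} with $n=\max(5,m)\le l$, note that a graph $G\in\mathcal{M}(\mathcal{C})$ on at most $l$ vertices is automatically a \emph{minimal} Ramsey graph for $\{C_3,\ldots,C_l\}$ (every monochromatic cycle it forces has length at most $|G|\le l$, and proper subgraphs admit colourings with no monochromatic cycle at all), and use the extension property to exhibit a colouring whose unique monochromatic cycle is $C_m\notin\mathcal{F}$. Your reduction to minimal Ramsey subgraphs and your bipartite K\H{o}v\'ari--S\'os--Tur\'an argument separating $\{C_3,C_4\}$ from $\{C_3\}$ are also sound.

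The gap is the remaining half of $l=4$, which the statement explicitly covers: for $\mathcal{F}=\{C_4\}$ you must produce a Ramsey graph for $\{C_3,C_4\}$ admitting a $2$-colouring with no monochromatic $C_4$, and your proposal ends by declaring that such a ``dedicated structured graph'' must be supplied, without supplying one. This is a genuine missing step, not a routine verification: $K_6$ fails because $R(C_4,C_4)=6$, and the $n=5$ output of Theorem \ref{thm:third}, namely $K_5-e$, fails as well --- colour a Hamilton cycle avoiding the missing edge red, and the remaining four edges, which form a path, blue; the only monochromatic cycle has length $5$, so $K_5-e\notin\mathcal{R}(\{C_3,C_4\})$. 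Hence, as written, your proof establishes the corollary only for $l\ge5$ together with the $\{C_3\}$-separation at $l=4$. In fairness, your diagnosis is sharper than the paper itself: the paper offers no proof of the corollary beyond its attribution to Theorem \ref{thm:third}, whose hypothesis $n\ge5$ (forced, as you note via Erd\H{o}s--Gallai, by the fact that every graph in $\mathcal{R}(\mathcal{C})$ contains a cycle of length at least $5$) makes that derivation collapse at $l=4$ exactly as you observe; the separation of $\{C_3,C_4\}$ from $\{C_4\}$ is therefore left unproven by the paper as well, so what you have identified is a defect in the paper's claim, but identifying it does not discharge your obligation to prove or restrict the statement.
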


Corollary \ref{cor:equiv} asserts that for every $l\geq 3$ the cycle family $\mathcal{F}:=\{C_3,\ldots, C_l\}$ and any proper subfamily $\mathcal{F}_0$ of $\mathcal{F}$ are Ramsey-separable (or Ramsey non-equivalent). The concepts were introduced in~\cite{SZZ} and subsequently studied in e.g.~\cite{FGLPS},~\cite{ARU} and~\cite{BL}. A central open problem in the area is whether some two distinct graphs are Ramsey equivalent. The existence of Ramsey graphs for cycles $C_k$ with girth $k$ (which follows from the Random Ramsey Theorem, see also~\cite{HRRS}) sorts out this question in the case of single cycles and also cycle families $\mathcal{F}_0$ containing the longest cycle $C_l$ of $\mathcal{F}$. In contrast, \ref{cor:equiv} provides constructively a supply of separating Ramsey graphs for all proper $\mathcal{F}_0$.\\

%Again, by probabilistic means as in~\cite{HRRS} distinguishing graphs can be obtained that answer the question for some $\mathcal{P}$.

%of large girth Ramsey graphs for increasingly large cycles; such are known to exist by e.g.~\cite{HRRS}, a result involving sophisticated random graph arguments (note that the large girth property is required as otherwise we my be ending up with finitely many graphs in $\mathcal{M}(\mathcal{C})$).

%What is the statement of Corollary \ref{cor:equiv}? Notably, there is no immediate guarantee that a Ramsey graph for a single cycle of fixed length does not always contain a larger (or smaller) monochromatic cycle also. This is partially sorted by the existence of Ramsey-graphs for cycles of large girth~\cite{HRRS}, but it still leaves open the question of non-equivalence of two consecutive cycle families of the form $\{C_3,\ldots, C_l\}$ and $\{C_k,\ldots, C_l\}$ for all $4\leq k\leq l$. A supply of separating Ramsey-graphs for all subfamilies is provided by Theorem \ref{thm:third}.

% The theorem is useful for the single colouring property, useful to disprove equivalence!

The organization of the paper is as follows. In each of the following three sections we provide the proofs of Theorem \ref{thm:first}, Theorem \ref{thm:second} and Theorem \ref{thm:third}, respectively, and subsequently discuss the possibility of some generalizations in the concluding remarks.

\section{Proof of theorem \ref{thm:first}}

Our proof of \ref{thm:first} relies on three lemmas. We state the elementary one first, which holds for any number of colours.

\begin{lem}\label{lem:elem}
Every $G\in\mathcal{M}_r(\mathcal{C})$ satisfies $r+1\leq\delta (G)\leq 2r-1$ and is also $2$-connected. 
\end{lem}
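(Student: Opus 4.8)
The plan is to read all four assertions directly off the density characterization in Proposition \ref{prop:NW}. Writing $n=v(G)$, membership $G\in\mathcal{M}_r(\mathcal{C})$ supplies two numerical constraints: the equality $e(G)=r(n-1)+1$ for $G$ itself, and the strict bound $\frac{e(H)-1}{v(H)-1}<r$ for every proper subgraph $H\subset G$ with $v(H)\geq 2$. By integrality I would record the latter in the cleaner form $e(H)\leq r(v(H)-1)$. Everything then reduces to elementary edge counting against these inequalities.

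For the upper bound I would compute the average degree directly: from $e(G)=r(n-1)+1$ one gets $\frac{2e(G)}{n}=2r-\frac{2(r-1)}{n}<2r$ for $r\geq 2$, so the minimum degree, being at most the average, satisfies $\delta(G)\leq 2r-1$. For the lower bound I would argue by contradiction. If some vertex $v$ had $\deg(v)\leq r$, then $H:=G-v$ is a proper subgraph on $n-1$ vertices with $e(H)=e(G)-\deg(v)\geq r(n-1)+1-r=r(v(H)-1)+1$, whence $\frac{e(H)-1}{v(H)-1}\geq r$ and $H$ would itself lie in $\mathcal{R}_r(\mathcal{C})$ by Proposition \ref{prop:NW}, contradicting the minimality of $G$. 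Hence $\delta(G)\geq r+1$.

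For $2$-connectivity I would first rule out disconnection, for which the key point is that a dense witness can always be localised to a single component. If $H\subseteq G$ satisfies $e(H)\geq r(v(H)-1)+1$ and $H$ decomposes into components $H_1,\dots,H_m$, then summing the per-component defects $e(H_i)-r\,v(H_i)$ and comparing with $e(H)-r\,v(H)\geq 1-r$ forces at least one $H_i$ to satisfy $e(H_i)\geq r(v(H_i)-1)+1$; so a disconnected $G$ would contain a proper Ramsey subgraph, and $G$ must be connected. To exclude a cut vertex $v$, I would split $G=G_1\cup G_2$ with $V(G_1)\cap V(G_2)=\{v\}$ and the edges partitioned, giving $v(G)=v(G_1)+v(G_2)-1$ and $e(G)=e(G_1)+e(G_2)$. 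The lower degree bound guarantees each part carries an edge, so $v(G_i)\geq 2$ and the proper-subgraph bound $e(G_i)\leq r(v(G_i)-1)$ applies to both; adding these yields $e(G)\leq r(v(G_1)+v(G_2)-2)=r(v(G)-1)$, contradicting $e(G)=r(v(G)-1)+1$.

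The two degree inequalities are routine; the step requiring the most care is the localisation of the density witness to one component, together with the vertex bookkeeping at the shared cut vertex, since these are the places where the additive behaviour of $e$ and $v$ must be matched against the strict-versus-rounded form of the density bound. Beyond keeping careful track of the $+1$ in $e(G)=r(n-1)+1$, I expect no genuine obstacle.
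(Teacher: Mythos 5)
Your proof is correct, but it takes a genuinely different route from the paper's on two of the three claims. The paper, like you, gets $\delta(G)\leq 2r-1$ from the average-degree computation $d(G)<2r$, but it proves the lower bound and the $2$-connectivity by \emph{colouring} arguments rather than edge counting: for $\delta(G)\geq r+1$ it colours the at most $r$ edges at a low-degree vertex $v$ with pairwise distinct colours, so that no monochromatic cycle can pass through $v$, forcing $G-v$ to be Ramsey and contradicting minimality; for $2$-connectivity it writes $G=G_1\cup G_2$ sharing at most one vertex, fixes a cycle-free colouring of $G_2$ (which exists because deleting an edge of $G_1$ destroys Ramseyness), and concludes that $G_1$ itself must be Ramsey, since no cycle crosses between the parts. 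You instead read everything off Proposition \ref{prop:NW}: deleting a vertex of degree $\leq r$ leaves a density witness ($e(G-v)\geq r(v(G-v)-1)+1$), a density witness localises to a single component, and a cut-vertex split makes $e$ and $v$ additive, giving $e(G)\leq r(v(G)-1)$ against $e(G)=r(v(G)-1)+1$. Your arithmetic checks out at all three places, including the component-defect step (if every component had $e(H_i)-rv(H_i)\leq -r$, then summing over $m\geq 2$ components gives $e(H)-rv(H)\leq -2r<1-r$, a contradiction) and the bookkeeping $v(G)=v(G_1)+v(G_2)-1$, $e(G)=e(G_1)+e(G_2)$ at the cut vertex. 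The trade-off: your argument is uniform and purely numerical, but it leans entirely on the Nash-Williams characterization and so is specific to cyclicity, whereas the paper's colouring arguments are the standard ones that survive in settings with no density characterization (for instance, minimal Ramsey graphs for a fixed $2$-connected graph $H$), which is presumably why the authors preferred them.
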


\begin{proof}
An immediate consequence of Proposition \ref{prop:NW} is that every $G\in\mathcal{M}_r(\mathcal{C})$ has size $e(G)=rv(G)-(r-1)$ and every subgraph $H\subseteq G$ has average degree $d(H)<2r$, which implies the upper bound for $\delta (H)$ (including the case $H=G$). For the lower bound for $\delta (G)$ suppose that $G$ contains a vertex $v$ of degree at most $r$. Colour the outgoing edges with distinct colours; since now no monochromatic cycle can pass through $v$, it follows that $G-v$ itself must be Ramsey for $\mathcal{C}$, thus contradicting the minimality of $G$. For connectivity suppose that $G$ can be disconnected by removing at most one vertex, so $G$ consists of two proper subgraphs $G_1, G_2$ which may or may not have a vertex in common. Since removing an edge from $G_1$ destroys the Ramsey property of the whole graph, we can fix an $r$-edge-colouring of $G_2$ without a monochromatic cycle. It follows that $G_1$ itself must be Ramsey for $\mathcal{C}$, again contradicting the minimality of $G$.
\end{proof}

In the following we assume that $r=2$. The following lemma asserts that contraction of certain edges preserves the Ramsey property for cyclicity.

\begin{lem}\label{contract}
If $G\in\mathcal{R}(\mathcal{C})$, then $G/e\in\mathcal{R}(\mathcal{C})$, where $G/e$ is the graph obtained from $G$ by contracting an arbitrary edge $e\in E(G)$ that lies in at most one triangle.
\end{lem}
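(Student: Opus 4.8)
The plan is to argue entirely through the density characterization of Proposition~\ref{prop:NW}, which for $r=2$ says that $G\in\mathcal{R}(\mathcal{C})$ if and only if some subgraph $H\subseteq G$ satisfies $e(H)\geq 2v(H)-1$. So I would fix such a witness $H$ for $G$ and try to produce inside $G/e$ a subgraph of the same kind. The natural candidate is the image $\pi(H)$ of $H$ under the contraction map $\pi\colon V(G)\to V(G/e)$ that merges the two endpoints $u,v$ of $e=uv$; the whole argument then reduces to bounding how many vertices and edges of $H$ are lost under $\pi$.

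First I would split into cases according to how $H$ meets $e$. If at most one of $u,v$ lies in $V(H)$, then $\pi$ is injective on both $V(H)$ and $E(H)$, so $\pi(H)\cong H$ and the inequality $e(\pi(H))\geq 2v(\pi(H))-1$ is inherited verbatim. The substantive cases are when both $u$ and $v$ lie in $V(H)$: here $\pi$ drops the vertex count by exactly one, $v(\pi(H))=v(H)-1$, so I only need $e(\pi(H))\geq 2v(H)-3$, i.e. I can afford to lose at most two edges. If $e\in E(H)$, contraction deletes $e$ and identifies one pair of parallel edges for each common neighbour of $u,v$ in $H$, so $e(\pi(H))\geq e(H)-1-s$, where $s$ is that number of common neighbours. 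Each such neighbour $w$ makes $\{u,v,w\}$ a triangle of $G$ through $e$ (recall $uv\in E(G)$ regardless of whether $uv\in E(H)$), so $s\leq 1$ by hypothesis, giving $e(\pi(H))\geq e(H)-2\geq 2v(H)-3=2v(\pi(H))-1$. If instead $e\notin E(H)$ while $u,v\in V(H)$, nothing is deleted and only the $\leq 1$ duplicate pair is merged, so $e(\pi(H))\geq e(H)-1\geq 2v(\pi(H))$, which is even stronger. In every case $\pi(H)$ witnesses $G/e\in\mathcal{R}(\mathcal{C})$.

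I expect the only real subtlety, rather than a genuine obstacle, to be the edge-accounting under contraction. The point to get right is that the hypothesis ``$e$ lies in at most one triangle'' is exactly calibrated to keep the edge loss at two: a single extra triangle would cost a third edge and break the inequality. One must also observe that it is the triangles of $G$, not merely of $H$, that bound the number of collapsed parallel edges, since the endpoints of $e$ are adjacent in $G$ even when the witness subgraph $H$ happens to omit the edge $e$ itself.
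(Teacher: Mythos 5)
Your proof is correct, but it follows a genuinely different route from the paper. The paper proves the lemma constructively: it fixes a $2$-edge-colouring of $G/e$, pulls it back to $G-e$, invokes the Ramseyness of $G$ to obtain either a monochromatic cycle in $G-e$ or both a red and a blue path joining the endpoints of $e$, and then checks in each case (no triangle through $e$; exactly one triangle through $e$) that the resulting monochromatic cycle survives contraction --- the key observation being that since $e$ is not a chord of a $4$-cycle, the relevant path has length at least $3$, so it does not collapse in $G/e$. You instead work entirely on the density side of Proposition~\ref{prop:NW}: you take a witness subgraph $H$ with $e(H)\geq 2v(H)-1$ and show its contracted image loses at most one vertex and at most two edges, with the triangle hypothesis (applied to triangles of $G$, as you rightly emphasize, since $uv\in E(G)$ even when $uv\notin E(H)$) bounding the number of merged parallel edge pairs by one. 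Your edge-accounting in all three cases is accurate, and the calibration remark is exactly right. Notably, the paper itself acknowledges this alternative after its proof: it remarks that Case 2 ``could have been dealt with computationally by invoking Proposition~\ref{prop:NW}'' --- and that doing so even yields minimality of $G/e$ when $G$ is minimal --- but prefers the colouring proof because it ``sheds more light on the subject matter.'' What your approach buys is brevity, uniformity (no case split between zero and one triangle is really needed beyond the arithmetic), and an easy generalization: the same computation shows that for $r$ colours one may contract any edge lying in at most $r-1$ triangles, since the witness inequality $e(H)\geq r(v(H)-1)+1$ tolerates the loss of $r$ edges per contracted vertex. What the paper's approach buys is structural information about how monochromatic cycles transform under contraction, which is in the spirit of its later girth-related discussion.
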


\begin{proof}
Let $e$ be as above and fix a $2$-edge-colouring of $G/e$.\\

\emph{Case 1.} If $e$ belongs to no triangle in $G$, then a $2$-edge colouring of $G/e$ induces a $2$-edge colouring of $G-e$, and any monochromatic cycle in $G-e$ induces a monochromatic cycle in $G/e$. If there is no monochromatic cycle in $G-e$, then, by Ramseyness of $G$, rejoining $e$ produces a monochromatic cycle irrespective of its colour. So $G-e$ must contain both a blue and red path joining the vertices of $e$. Note that since these are edge-disjoint, at least one of the paths must have length at least $3$, otherwise $e$ would be chord to a four-cycle. Hence there is a monochromatic cycle in $G/e$.\\

\emph{Case 2.} If $e$ belongs to one triangle in $G$, then a $2$-edge-colouring of $G/e$ induces a $2$-edge colouring of $G-e$ with the other two triangle edges in the same colour. If $G-e$ has no monochromatic cycle, proceed as above. Suppose $G-e$ has a monochromatic cycle. If it does not use both of the other edges of the triangle containing $e$, then it induces a monochromatic in $G/e$. If the cycle does use both, so $e$ is a chord to the cycle, then it must be of length at least $5$ since $e$ is not chord to a four-cycle. But then again there is a path of length at least $3$ joining the vertices of $e$. Hence there is a monochromatic cycle in $G/e$. This completes the proof.
\end{proof}

Consequently, for graphs with every edge in at most one triangle, e.g. such with girth $\geq 4$, the property of being Ramsey for cyclicity is stable under arbitrary edge-contractions. Note that we could have dealt with case $2$ computationally by invoking Proposition \ref{prop:NW} (thus even obtaining that for $e$ in one triangle the Ramsey-graph $G/e$ is minimal whenever $G$ is) but a constructive proof sheds more light on the subject matter.

\begin{lem}\label{attached}
Any $2$-connected graph $G$ with every edge contained in at least two triangles satisfies $e(G)\geq 2v(G)$, unless $v(G)\leq 6$.
\end{lem}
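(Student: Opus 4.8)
The plan is to set up a discharging argument on the vertices, exploiting throughout that the hypothesis is equivalent to $|N(u)\cap N(w)|\ge 2$ for every edge $uw$. First I would record the local structure. For any vertex $u$ and neighbour $w$, the two common neighbours of $u,w$ lie in $N(u)$, so every $w\in N(u)$ has at least two neighbours inside $N(u)$; hence $G[N(u)]$ has minimum degree at least $2$. Consequently $\delta(G)\ge 3$, and if $\deg(u)=3$ then $N(u)$ must span a triangle, so $\{u\}\cup N(u)$ induces a $K_4$. If in fact $\delta(G)\ge 4$ we are done, since $2e(G)=\sum_v\deg(v)\ge 4v(G)$; so I may assume that the set $L$ of degree-$3$ (\emph{light}) vertices is non-empty. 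Writing $2e(G)=\sum_v\deg(v)$, the goal $e(G)\ge 2v(G)$ becomes
\[
\sum_{\deg(u)\ge4}\bigl(\deg(u)-4\bigr)\ \ge\ |L|,
\]
i.e. the heavy ($\deg\ge4$) vertices must supply one unit of excess degree per light vertex.

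Each light vertex $w$ is an \emph{apex} over the triangle $N(w)$, and I would group the light vertices by this apex-triangle, letting $k_T\ge1$ be the number of light apices over a triangle $T$; then every corner of $T$ has degree $\ge k_T+2$. Two qualitative facts drive the count. (a) A light vertex has at most one light neighbour: two light neighbours would force three light vertices to have all their edges inside a single $K_4$, whose fourth vertex is then a cut vertex, contradicting $2$-connectivity (valid as $v(G)>4$). In particular each light vertex has at least two heavy neighbours, so the number of light–heavy edges is at least $2|L|$, that is $\sum_{\deg(u)\ge4}\ell(u)\ge 2|L|$, where $\ell(u)$ denotes the number of light neighbours of $u$. (b) If two light vertices $v,a$ are adjacent, with common $K_4$ $\{v,a,b,c\}$, then $b$ and $c$ have degree $\ge5$: otherwise a fourth neighbour $d$ of $b$ would make the edge $bd$ have $c$ as its only possible common neighbour, violating the triangle hypothesis.

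The quantitative surplus comes from $2$-connectivity applied to whole clusters. The cluster $T\cup\{\text{light apices over }T\}$ has all edges of its apices inside it, so edges leaving it emanate only from the three corners; unless the cluster is all of $G$, at least two corners must have an outside neighbour (else one corner is a cut vertex), and hence have degree $\ge k_T+3$. When the cluster is all of $G$ the graph is the book $K_3\vee\overline{K_{k_T}}$, for which $e=3k_T+3$ and $2v(G)=2k_T+6$, so $e\ge 2v(G)$ holds as soon as $k_T\ge3$, in particular whenever $v(G)\ge7$. With these facts I would discharge by routing the $k_T$ units demanded by the apices of $T$ onto the (at least two) corners of degree $\ge k_T+3$ it is guaranteed to possess, checking that each triangle is internally balanced: two corners of degree $\ge k_T+3$ provide excess $\ge 2(k_T-1)\ge k_T$ for $k_T\ge2$, while the remaining configuration ($k_T=1$ with a light corner, i.e. an adjacent pair) is paid by the two degree-$\ge5$ corners of (b).

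The main obstacle I anticipate is the \emph{global} consistency of this routing rather than any single cluster, because a heavy vertex may be a corner of many apex-triangles simultaneously and a light neighbour of many light vertices at once. The naive per-vertex estimate $\deg(u)-4\ge\tfrac12\ell(u)$, which together with $\sum_{\deg(u)\ge4}\ell(u)\ge2|L|$ would immediately finish the proof, is genuinely false precisely at the corners of dense clusters such as $K_5-e$ and the books $K_3\vee\overline{K_k}$, where a corner of degree $k+2$ carries $k$ light neighbours. The whole weight of the argument therefore falls on showing that the surplus forced by $2$-connectivity at the two high corners of each triangle can be apportioned consistently across all the triangles sharing a given heavy vertex, so that no heavy vertex is charged beyond its excess $\deg(u)-4$; and on isolating by hand the finitely many small configurations with $v(G)\le6$ (among them $K_4$, $K_5-e$, $K_4\vee K_4$ and the octahedron) for which the conclusion genuinely fails or is tight.
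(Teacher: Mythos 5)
Your preparatory observations are all correct, and several of them coincide with the paper's own starting point: the hypothesis is indeed equivalent to every edge having at least two common neighbours of its endpoints, this forces $\delta(G)\geq 3$ and places every degree-$3$ vertex inside a $K_4$, and your facts (a) and (b) as well as the analysis of the book $K_3\vee\overline{K_{k_T}}$ are sound. However, what you have written is a plan for a discharging argument rather than a proof, and the missing part is not a routine verification --- it is essentially the entire content of the lemma. Two concrete holes. First, even the per-triangle accounting is incomplete: if $k_T=1$ and all three corners are heavy, your guarantee is only that two corners have degree $\geq k_T+3=4$, i.e.\ excess $\geq 0$, which does not pay the one unit demanded by the apex (your inequality $2(k_T-1)\geq k_T$ requires $k_T\geq 2$, and your clause for $k_T=1$ covers only the case of a light corner). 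This case can be patched --- if every corner with an outside neighbour had degree exactly $4$, the two-triangle condition forces all three corners to share a single outside neighbour, which is then a cut vertex --- but the patch is an additional argument you did not supply. Second, and decisively, you explicitly leave open the global consistency of the routing: a heavy vertex can be a corner of many apex-triangles simultaneously, and nothing in the proposal shows that the total charge it receives stays below $\deg(u)-4$. Since you yourself exhibit configurations ($K_5-e$, books) where the naive local bound fails, the truth of the lemma genuinely depends on how $2$-connectivity propagates through the whole graph, and that is exactly what remains unproven.

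It is worth contrasting this with the paper's route, which avoids global bookkeeping altogether: the paper argues by induction on $v(G)$, fixing a degree-$3$ vertex $v$ and a $K_4$ containing it, and distinguishing whether this $K_4$ is \emph{strongly attached} (some outside vertex has two neighbours in it --- then delete $v$ and add one edge, losing one vertex and at least two edges) or \emph{weakly attached} (then contract the $K_4$; if the contracted vertex is a cutvertex, split the graph into pieces, each of which inherits the hypothesis, and apply induction to each piece). The induction hypothesis carries precisely the global information that your discharging scheme would have to track by hand, at the cost of a separate analysis of the base cases $v(G)=7,8,9$. If you want to complete your approach, the cut-vertex decomposition in the weakly attached case is the ingredient your scheme is missing; without it, the proposal is a correct collection of local lemmas with the crux left open.
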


\begin{proof}
We start with two simple observations:\\

(1) Since every edge of $G$ is chord to a $4$-cycle, we must have $\delta (G)\geq 3$. Note that wlog. we can assume that equality holds, because if $\delta (G)\geq 4$, then $e(G)\geq 2v(G)$ follows by the Handshaking Lemma. Suppose therefore that there is $v\in G$ with $d(v)=3$.\\

(2) Observe further that every vertex $v\in G$ with $d(v)=3$ necessarily lies in a $K_4$ in $G$. This is because each of the three edges incident to $v$ must be a chord of a $C_4$, which due to $d(v)=3$ is necessarily spanned by the other two.\\

Now fix both a $v\in G$ with $d(v)=3$ and a $K_:=K_4\subset G$ with $v\in K$.\\

\emph{Remark.} At this stage it is clear that the two base graphs $K_5-e$ and $K_4\vee K_4$ are the only graphs $G$ with $v(G)<7$, $\delta (G)=3$ and every edge chord of a $4$-cycle: this is clear when $v(G)=5$, and also when $v(G)=6$, since then $K_4\subset G$ with precisely $5$ more edges to built a further $K_4$ housing the remaining two vertices. (Hence, the two graphs also prove the lemma false when $v(G)<7$.)\\

Suppose $K$ is \emph{strongly attached} in $G$, that is, that some vertex $z$, say, outside of $K$ in $G$ is adjacent to at least two vertices $u, w$ in $K$. We choose the reduction of $G$ so that $G'$ also satisfies the hypothesis of the lemma with $v(G')=v(G)-1$ and $e(G')\leq e(G)-2$: Obviously $v$ is not adjacent to $z$, so $v\neq u$ and $v\neq w$. Let $t$ denote the fourth vertex in $K$; it may or may not be adjacent to $z$. Obtain $G'$ from $G$ by deleting $v$ and its three incident edges, and also add the edge between $t$ and $z$, if it does not exist already, so as to ensure that every edge of $G'$ is in at least two triangles. Note that $G'$ remains $2$-connected since clearly none of its vertices is a cutvertex.\\

Else, if $K$ is \emph{weakly attached} in $G$, that is, if every vertex of $G$ outside $K$ is adjacent to at most one vertex in the $K$, consider the following.

If $K$ does not contract to a cutvertex, then $G':=G/K$ clearly satisfies the hypothesis of the lemma with $v(G')=v(G)-3$ and $e(G')=e(G)-6$.

If $K$ does contract to a cutvertex $v$ in $G/K$, let $V_1, \ldots, V_k$ denote the vertex classes of the $k\geq 2$ connected components of $G/K-v$. Note that since $K$ is weakly attached we have that $n_i:=\left|V_i\right|\geq 3$ and that each of the subgraphs $G_i:=G[V_i\cup V(K)]$ satisfies the hypothesis of the lemma with $n_i+4=\left|V_i\cup V(K)\right|<v(G)$, so by induction we obtain
\begin{eqnarray*}
e(G) & = & e(G_1)+\ldots + e(G_k)-(k-1)e(K)\geq  2(n_1+4)+\ldots 2(n_k+4)-6k+6\\
& = & 2(n_1+\ldots +n_k)+8k-6k+6 = 2(v(G)-4)+2k+6\geq 2v(G)
\end{eqnarray*}

Note that the result now easily follows by induction on $v(G)$, provided it holds true in the cases $v(G)=7, 8, 9$:\\

For the cases $v(G)=8, 9$, consider as before a $K:=K_4\subset G$. If $K$ can be chosen strongly attached, we successfully reduce to the cases $v(G)=7, 8$. If not, then contracting a weakly attached $K$ necessarily results in either $K_5-e$ or $K_4\vee K_4$, with the contraction having occurred at one of its high degree vertices (else a strongly attached $K_4$ in the reduced graph must have already been strongly attached in $G$). Since each of the low degree vertices in the reduced graph is contained in a $K_4$ as well, the same $K_4$'s must have existed in $G$ prior contraction of $K$ or $K$ could not have been weakly attached. Consequently, $K$ intersects one of those $K_4$ at a cutvertex, thus contradicting $2$-connectedness.\\

The case $v(G)=7$ is more involved as we cannot reduce it to a smaller graph as in the previous cases: Suppose there exists a $2$-connected graph $G$ on $7$ vertices, with every edge occurring as the chord to a $4$-cycle, which satisfies $e(G)<2v(G)=14$. We now force a contradiction in several steps:

Fix a $K:=K_4$ in $G$ and let $v, u_1, u_2$ denote the $3$ vertices of $G$, which are not vertices of $K$. Since $G$ is $2$-connected, at least $2$ vertices of $K$ are incident to edges not in $K$, hence have degree $\geq 4$ in $G$. If any of these vertices has degree $\geq 5$, then the degree sum of $G$ is $\geq 5\cdot 3 + 4+5=24$. If, however, all of these have degree $=4$, then there must be at least $3$ vertices of degree $=4$ (since we cannot have an odd number of odd degree vertices), in which case the degree sum of $G$ is $\geq 4\cdot 3+3\cdot 4=24$. In any case, $G$ has at least $12$ edges. Hence, as $e(G)\leq 13$, $G$ is obtained from $K\cup\{v, u_1, u_2\}$ by adding $6$ or $7$ edges.

Note that since the degrees of $v, u_1, u_2$ are all $\geq 3$, but only $\geq 7$ edges can join $v, u_1, u_2$ to the vertices of $K$, the induced subgraph $H$ of $G$ on vertices $v, u_1, u_2$ contains at least $2$ edges. Wlog. suppose the edges are $u_1 v$ and $vu_2$ and further let $w$ be a vertex of $K$ adjacent to $v$. Note that at this stage there are at most $4$ more edges to add.

We claim that $u_1, u_2, v, w$ must form the vertices of a further $K_4$ in $G$. In that case, $G$ is obtained by adding at most one edge to the graph obtained by identifying $K$ with a further copy of $K_4$ at vertex $w$. This is a contradiction because if we do not add the edge, $G$ will not be $2$-connected, but if we do add the edge, it will not be chord to a $4$-cycle because its end vertices will only have $w$ as a common neighbour.

If $d(v)=3$, we are done, because $v$ is then contained in a $K_4$ with the remaining vertices necessarily given by the neighbours $u_1, u_2, w$ of $v$. If $d(v)\geq 4$, note that we must have $d(u_1)=3$ and $d(u_2)=3$. This follows since $2$ of $u_1, u_2, v$ must have degree $3$, otherwise $e(G-K)\geq(3+4+4)-e(H)\geq(3+4+4)-3>7$, a contradiction.

Hence, both $u_1$ and $u_2$ must lie in a $K_4$ (containing $v$) in $G$. Note that they must lie in the same $K_4$, otherwise the $K_4$ of $u_1$ and $v$ would take up $\geq 3$ of our remaining edges, thus leaving $\leq 1$ to be incident to $u_2$, in which case $d(u_2)\leq 2$, a contradiction. Hence $u_1, u_2, v$ lie in a $K_4$ in $G$, in particular $u_1$ and $u_2$ are adjacent. This leaves $\leq 3$ edges to build up $G$.

Assume, towards the final contradiction, that $w$ is not the fourth vertex of that $K_4$. Then, as $d(u_1)=3$ and $d(u_2)=3$, $w$ cannot be adjacent to $u_1$ or $u_2$. Since, however, the edge $wv$ is chord to a $4$-cycle, there must be two further vertices in $K$ that are adjacent to $v$. But then there remains at most one further edge to be incident to one of $u_1$ or $u_2$, in which case either $d(u_1)=2$ or $d(u_2)=2$, a contradiction.
%(Include a listing of all the 4 graphs satisfying the hypothesis?)
\end{proof}

We are now ready to prove Theorem \ref{thm:first}.

\begin{proof}
Given $G\in\mathcal{M}(\mathcal{C})$, apply Lemma \label{contract} to a suitable edge and take a minimal Ramsey-subgraph of the resulting Ramsey-graph. Repeat this process until you end up with a graph $G_0$ with the property that every edge of $G$ is in at least two triangles. Since $G_0\in\mathcal{M}(\mathcal{C})$, so $e(G_0)=2v(G_0)-1$, we must have $v(G_0)\leq 6$ by Lemma \ref{attached}. The only such possibilities allowing no further contractions are $K_5-e$ and $K_4\vee K_4$ (the other such graphs on $6$ vertices all reduce to $K_5$-e as remarked above).
\end{proof}

\section{Proof of theorem \ref{thm:second}}

We partition Theorem \ref{thm:second} into three lemmas, each governing the effect of the respective operations on a graph in $\mathcal{M}(\mathcal{C})$, then show how they jointly imply Corollary \ref{cor:inf}.

\begin{lem}\label{lem:path}
If $G\in\mathcal{M}(\mathcal{C})$, then $G^*\in\mathcal{M}(\mathcal{C})$, where $G^*$ is the graph obtained from $G$ by applying construction (1) to an arbitrary $2$-path in $G$.
\end{lem}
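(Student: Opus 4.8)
The plan is to verify the two numerical conditions of Proposition~\ref{prop:NW} directly for $G^*$. Construction~(1) adds the single vertex $x$ together with the three edges $xu,xv,xw$ and deletes $vw$, so $v(G^*)=v(G)+1$ and $e(G^*)=e(G)+2$; since $G\in\mathcal{M}(\mathcal{C})$ gives $e(G)=2v(G)-1$, this yields $e(G^*)=2v(G^*)-1$, which is the first required condition. It therefore remains to check that every proper subgraph $H\subset G^*$ satisfies $\frac{e(H)-1}{v(H)-1}<2$, i.e.\ $e(H)\leq 2v(H)-2$.

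It is convenient to set $\phi(H):=2v(H)-e(H)$, so that the target becomes $\phi(H)\geq 2$ for every proper $H\subset G^*$, while minimality of $G$ provides $\phi(J)\geq 2$ for every proper $J\subset G$ and $\phi(G)=1$. Given a proper subgraph $H\subset G^*$, I would distinguish cases according to how $H$ meets the new vertex $x$. If $x\notin V(H)$, then $H$ is a subgraph of $G-vw$, hence a proper subgraph of $G$, and $\phi(H)\geq 2$ is immediate. If $x\in V(H)$, write $H':=H-x$ and let $d\in\{0,1,2,3\}$ be the number of edges of $H$ incident to $x$; then $\phi(H)=\phi(H')+2-d$, and since $H'\subseteq G-vw$ is again a proper subgraph of $G$ we have $\phi(H')\geq 2$. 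This already disposes of the cases $d\leq 2$, for then $\phi(H)=\phi(H')+(2-d)\geq\phi(H')\geq 2$.

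The only case demanding an actual idea, which I expect to be the crux, is $d=3$: now $\phi(H)=\phi(H')-1$, and the bare estimate $\phi(H')\geq 2$ falls one short. The remedy is that $d=3$ forces $v,w\in V(H')$, so I may form $\hat H:=H'+vw$, a genuine subgraph of $G$ with $\phi(\hat H)=\phi(H')-1$. The point is that $\hat H=G$ would force $H'=G-vw$ and hence $H=G^*$, contradicting the properness of $H$; thus $\hat H$ is a proper subgraph of $G$, and minimality gives $\phi(H')-1=\phi(\hat H)\geq 2$, whence $\phi(H)=\phi(H')-1\geq 2$ as needed.

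A final routine check handles the degenerate subgraphs with $v(H')\leq 1$ (where $H$ consists of $x$ together with at most one further vertex), for which $\phi(H)\geq 2$ holds trivially. Combining all cases yields $\phi(H)\geq 2$ for every proper $H\subset G^*$, and together with $\phi(G^*)=1$ Proposition~\ref{prop:NW} gives $G^*\in\mathcal{M}(\mathcal{C})$. The whole argument is driven by the bookkeeping identity $\phi(H)=\phi(H-x)+2-d$, the observation that deleting $x$ always lands inside the proper subgraph $G-vw$ of $G$, and the single non-trivial step of reinserting $vw$ to convert properness in $G^*$ into properness in $G$.
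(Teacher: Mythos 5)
Your proof is correct and follows essentially the same route as the paper's: both verify via Proposition~\ref{prop:NW} that $G^*$ has global density $e(G^*)=2v(G^*)-1$ and that every proper subgraph of $G^*$ has density below $2$, by removing the new vertex $x$ and comparing against a proper subgraph of $G$. If anything, your handling of the case $d=3$ (reinserting $vw$ and observing that the resulting subgraph of $G$ must be proper, since otherwise $H=G^*$) is more careful than the paper's own argument, which asserts that a proper subgraph containing $x$ ``uses at most two new edges'' and thereby glosses over precisely this case.
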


\begin{proof}
The construction increases the number of vertices by $1$ and the number of edges by $2$, so $G^+$ retains the correct global density in order to be in $\mathcal{M}(\mathcal{C})$. Now, let $H^+\subset G^+$ be a proper subgraph and suppose wlog. that it uses the new vertex, so it uses at most two new edges. Then there exists a proper subgraph $H\subset G$ with $e(H^+)\leq e(H)+2$ and $v(H^+)=v(H)+1$, so
$$\frac{e(H^+)-1}{v(H^+)-1}\leq\frac{(e(H)+2)-1}{(v(H)+1)-1}=\frac{(e(H)-1)+2}{v(H)}<\frac{2(v(H)-1)+2}{v(H)}=2.$$
\end{proof}

Note that Lemma \label{lem:path} alone provides a constructive proof for the existence of infinitely many non-isomorphic minimal Ramsey-graphs for cyclicity. Indeed, applying this to $K_5-e$ in one of two possible ways (up to isomorphism), results in two further minimal Ramsey-graphs on $6$ vertices, one of which is the edge-maximal planar graph with one edge removed.

%We could not have proved this by simply taking minimal subgraphs of Ramsey graphs for single cycles as we could be ending up with a finite collection of them; in order to repair this argument one could invoke instead those Ramsey graphs for cycles, which have large girth, the existence of which involves sophisticated and purely existential random graph arguments.

\begin{lem}\label{lem:diam}
If $G\in\mathcal{M}(\mathcal{C})$, then $G^{*}\in\mathcal{M}(\mathcal{C})$, where $G^{*}$ is the graph obtained from $G$ by applying construction (2) to an arbitrary edge in $G$.
\end{lem}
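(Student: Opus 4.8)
The plan is to verify the two conditions of Proposition \ref{prop:NW} directly for $G^{*}$, just as in the proof of Lemma \ref{lem:path}, rather than trying to realize construction (2) as an iterated construction (1). The reason for this choice is a genuine obstruction: the natural intermediate graph obtained after adjoining $x$ together with the edges $xv, xw$ has a vertex of degree $2$, and so fails to lie in $\mathcal{M}(\mathcal{C})$ by Lemma \ref{lem:elem}. Hence Lemma \ref{lem:path} cannot simply be chained, and a self-contained density computation is called for.

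First I would record the effect of the construction on the parameters. Writing $x, y$ for the two new vertices, $G^{*}$ arises from $G$ by adding the five edges $xv, xw, xy, yv, yw$ and deleting $vw$; thus $v(G^{*}) = v(G)+2$ and $e(G^{*}) = e(G)+4$. Since $e(G) = 2v(G)-1$, this gives $e(G^{*}) = 2v(G^{*})-1$, i.e. the correct global density $\frac{e(G^{*})-1}{v(G^{*})-1} = 2$. Note also that the five new edges span $K_4 - vw$ on $\{v,w,x,y\}$, and that the edges of $G^{*}$ lying within the old vertex set $V(G)$ are precisely those of $G - vw$.

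The substance is the local condition: every proper subgraph $H \subset G^{*}$ must satisfy $e(H) \le 2v(H)-2$. I would split on how many of $x, y$ lie in $H$ and set $H_0 := H - x - y \subseteq G - vw$. If $H$ uses neither new vertex, then $H \subseteq G - vw \subsetneq G$ and the bound is immediate from the minimality of $G$. If $H$ uses exactly one new vertex, that vertex contributes at most two edges (the edge $xy$ being unavailable), so $e(H) \le e(H_0)+2$ with $v(H_0)=v(H)-1$; applying the minimality of $G$ to $H_0 \subsetneq G$ gives $e(H_0)\le 2v(H_0)-2$ and hence $e(H) \le 2v(H_0) = 2v(H)-2$. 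The two one-vertex cases are interchanged by the symmetry $x \leftrightarrow y$.

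The only delicate case, and the main obstacle, is when $H$ contains both $x$ and $y$: a crude count then loses track of the edge $xy$ and leaves the inequality short by exactly one. To recover it I would distinguish whether $H$ uses all five new edges. If it uses at most four, then $e(H)\le e(H_0)+4 \le 2v(H_0)+2 = 2v(H)-2$ as before. If it uses all five, then in particular $v,w \in V(H_0)$ while $vw \notin E(H_0)$, so I would add the edge $vw$ back in: $H_0 + vw$ is a subgraph of $G$, and in fact a \emph{proper} one, since $H_0 + vw = G$ would force $H_0 = G - vw$ and thus $H = G^{*}$, contradicting the properness of $H$. Minimality of $G$ then yields $e(H_0+vw) \le 2v(H_0)-2$, i.e. $e(H_0)\le 2v(H_0)-3$, whence $e(H) = e(H_0)+5 \le 2v(H_0)+2 = 2v(H)-2$. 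This closes the gap and completes the verification.
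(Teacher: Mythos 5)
Your proof is correct. The parameter count ($v(G^{*})=v(G)+2$, $e(G^{*})=e(G)+4$, hence the right global density) is right, the split according to how many of $x,y$ a proper subgraph $H$ contains is exhaustive, and you correctly isolate and resolve the one genuinely delicate case: when $H$ uses all five new edges, reinstating the deleted edge $vw$ inside $H_{0}=H-x-y$ produces a \emph{proper} subgraph $H_{0}+vw$ of $G$, whose density bound recovers exactly the unit that a crude count loses. Your opening diagnosis is also accurate: the intermediate graph in construction (2) has a vertex of degree $2$, hence is not in $\mathcal{M}(\mathcal{C})$ by Lemma \ref{lem:elem}, so Lemma \ref{lem:path} cannot simply be applied twice. (One pedantic point: if $H_{0}$ is empty, the inequality $e(H_{0})\le 2v(H_{0})-2$ you invoke fails; but then $H$ spans at most two vertices and one edge, so the required bound holds trivially. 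This is a one-line patch, not a gap.)

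Your route is, however, genuinely different from the paper's. The paper opens its proof by remarking that the lemma could be proved exactly as you do, via Proposition \ref{prop:NW} in the style of Lemma \ref{lem:path}, but then deliberately gives a colouring-based, graph-chasing argument instead, because it establishes a more general claim: the diamond may be replaced by any gadget $D$ with two non-adjacent contact vertices $c,d$ such that every cycle-free $2$-edge-colouring of $D$ contains a monochromatic path joining $c$ and $d$ (for instance, any graph of $\mathcal{M}(\mathcal{C})$ with an edge removed). Ramseyness of the composition is shown by finding a red and a blue path between the contacts in $G-e$ and closing one of them into a cycle through the monochromatic path in $D$; minimality, for edge-minimal $D$, is shown by explicitly exhibiting a cycle-free colouring of $G^{*}-f$ for every edge $f$, with cases according to whether $f$ lies in $D$ or in $G$. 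Your computation buys brevity and self-containment: it is pure arithmetic on top of Proposition \ref{prop:NW}. The paper's argument buys generality and, more importantly, explicit control over how cycle-free colourings extend across the inserted gadget --- precisely the information that is reused in Steps 2 and 4 of the proof of Theorem \ref{thm:third}, and which a density count alone does not supply.
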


\begin{proof}
While Lemma \label{lem:diam} could be proved similarly to Lemma \label{lem:path} via Proposition \ref{prop:NW}, it is possible to provide an exhaustive graph-chasing proof, which may be of independent interest as it works in more generality. Note that the effect of construction (2) is the replacement of an edge by the diamond graph with the non-adjacent vertices taking the place of the ends of the original edge. We prove the lemma with the diamond replaced by any graph $D$, which admits two non-adjacent \emph{contact vertices} $c, d$ with the property that in any $2$-edge-colouring of $D$ without a monochromatic cycle there is a monochromatic path joining $c$ and $d$ (note that a graph in $\mathcal{M}(\mathcal{C})$ with an edge $cd$ removed already has this property). In particular, we prove the following claim.\\

%The interpretation is that the conductor is either in off-state (monochromatic cycle) or in on-state (monochromatic current flow from $c$ to $d$) and that the on-state is certainly possible. Note that the existence of conductors $D$ with precisely one monochromatic flow in some on-state is easily established: A minimal Ramsey-graph with an edge removed is already a conductor with the vertices of the missing edge as contact vertices; take $D$ to be an edge-minimal subgraph with the conductor property.\\
%provides a constructive proof of the existence of $G_n\in\mathcal{M}(\mathcal{C})$ with $v(G_n)=5n-2$ and $\chi (G_n)=3$, for every $n\geq 5$ (just replace every edge in $G$ by a diamond).\\

\textsl{Claim.} \emph{If $G\in\mathcal{R}(\mathcal{C})$, then $G^{*}\in\mathcal{R}(\mathcal{C})$, where graph $G^{*}$ is obtained from $G$ via parallel composition of $G-e$ with $D$ (that is, its contact edges taking the place of the ends of $e$). What's more, if $G\in\mathcal{M}(\mathcal{C})$ and $D$ is edge-minimal with the above property (given fixed contact vertices), then $G^{*}\in\mathcal{M}(\mathcal{C})$ as well.}\\

\textsl{Proof of Claim.} Fix a blue-red colouring of the edges of $G^{*}$. This restricts to a colouring of $G-e$; if this admits a monochromatic cycle, then so does $G^{++}$. Otherwise, since $G\in\mathcal{R}(\mathcal{C})$, there is both a red and a blue path in $G-e$ joining the contact vertices. One of these forms a monochromatic cycle in $G^{*}$ along with the monochromatic path in $D$, which must exist by definition whenever there is not already a monochromatic cycle in $D$.\\

Now suppose that both $G$ and $D$ are chosen minimal, in which case both clearly have minimal degree at least $2$. Given any edge $f$ of $G^{*}$ (so $f\neq e$), we show that in some colouring of $G^{*}-f$ there is no monochromatic cycle. If $f$ is an edge of $D$, such a colouring is obtained by fixing both a cycle-free colouring of $G-e$ and a cycle-free colouring of $D-f$ without a monochromatic path joining the contact vertices, and then inserting the coloured $D-f$ into the coloured $G-e$. If $f$ is an edge of $G$, fix both a cycle-free colouring of $G-f$ and a cycle-free colouring of $D$ with precisely one monochromatic path joining the contact vertices. If the path does not have the colour of $e$ in $G$, switch the colours in $D$. Now remove $e$ from the coloured $G-f$ and insert the coloured $D$. In the colouring of $G^{*}$ thus obtained there cannot be a monochromatic cycle. Suppose otherwise; then any monochromatic cycle would need to contain the whole monochromatic path in $D$ (as $G-f-e$ is coloured cycle-free) and since the contact vertices are non-adjacent, they would need to be joined by a path in $G-f-e$ of the colour of the path in $D$, and of length at least $2$. But along with $e$ any such path would form a monochromatic cycle in $G-f$. Contradiction.
\end{proof}

\begin{lem}
If $G\in\mathcal{M}(\mathcal{C})$, then $G^{*}\in\mathcal{M}(\mathcal{C})$, where $G^{*}$ is the graph obtained from $G$ by applying construction (3) to an arbitrary $2$-path in $G$.
\end{lem}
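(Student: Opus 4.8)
The construction (3) adds two vertices $x,y$ and, net, four edges (six attachment edges $xu,xv,xw,yu,yv,yw$ minus the two deleted path edges $uv,vw$), so $v(G^{*})=v(G)+2$ and $e(G^{*})=e(G)+4$. Since $G\in\mathcal{M}(\mathcal{C})$ forces $e(G)=2v(G)-1$ by Proposition \ref{prop:NW}, we get $e(G^{*})=2v(G^{*})-1$, i.e.\ the global density of $G^{*}$ equals $2$; taking $H=G^{*}$ in Proposition \ref{prop:NW} already gives $G^{*}\in\mathcal{R}(\mathcal{C})$. The plan is therefore to prove minimality directly through Proposition \ref{prop:NW}, exactly in the density spirit of Lemma \ref{lem:path}: it suffices to show that every proper subgraph $H\subsetneq G^{*}$ satisfies $e(H)\le 2v(H)-2$. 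Throughout I will use the minimality of $G$ in the form: every subgraph $J\subseteq G$ obeys $e(J)\le 2v(J)-1$, with equality if and only if $J=G$.

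The main device is a \emph{trade-back} reduction. Given a proper $H\subsetneq G^{*}$, set $k=\left|V(H)\cap\{x,y\}\right|\in\{0,1,2\}$ and $H_{0}=H-\{x,y\}$; since $x,y$ are non-adjacent and meet $G$ only at $u,v,w$, one has $H_{0}\subseteq G-uv-vw$, hence $H_{0}$ is a subgraph of $G$ that contains neither $uv$ nor $vw$. Writing $m$ for the number of attachment edges present in $H$, we have $v(H)=v(H_{0})+k$ and $e(H)=e(H_{0})+m$. Now form $H^{\circ}\subseteq G$ from $H_{0}$ by \emph{re-inserting} each of $uv,vw$ whose endpoints already lie in $V(H_{0})$; let $p\in\{0,1,2\}$ be the number so added, so that $v(H^{\circ})=v(H_{0})$ and $e(H^{\circ})=e(H_{0})+p$. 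Provided $H^{\circ}\ne G$, minimality of $G$ yields $e(H^{\circ})\le 2v(H^{\circ})-2$, and unwinding gives $e(H)\le 2v(H)-2-2k+(m-p)$. Thus everything reduces to the key inequality $m-p\le 2k$.

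To prove $m-p\le 2k$, note that each of $u,v,w$ is the endpoint of at most $k$ attachment edges (at most one from $x$ and one from $y$), and any attachment edge at a vertex forces that vertex into $V(H_{0})$; meanwhile $p$ counts exactly those of $uv,vw$ (both running through $v$) whose endpoints both lie in $V(H_{0})$. A short check over the at most eight patterns of which of $u,v,w$ lie in $V(H_{0})$ then gives $m-p\le 2k$ in every case; the extremal pattern is all three present with $k=2$, where $m\le 6$, $p=2$, and $m-p\le 4=2k$. The remaining boundary case $H^{\circ}=G$ must be treated by hand: since $H_{0}$ omits $uv,vw$, equality $H^{\circ}=G$ can occur only when $p=2$ and $H_{0}=G-uv-vw$, so that $e(H)=2v(G)-3+m$ with $v(H)=v(G)+k$. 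Here $m=6$ would put all attachment edges and both $x,y$ into $H$, forcing $H=G^{*}$, contrary to properness; hence $m\le5$ and $e(H)\le 2v(G)+2=2v(H)-2$ follows.

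I expect the genuine obstacle to be the case $k=2$ with many attachment edges, where naively bounding $e(H_{0})\le 2v(H_{0})-2$ overshoots the target by up to two. The whole point of the argument is that this deficit is paid back precisely by reinserting the deleted path edges $uv,vw$—legitimate exactly because their endpoints are forced into $V(H_{0})$ whenever the corresponding attachment edges are present—together with the observation that the only density-extremal configuration, $H^{\circ}=G$ with all six attachment edges, is $H=G^{*}$ itself and hence excluded by properness. Assembling the three regimes $k=0,1,2$ completes the verification that every proper subgraph of $G^{*}$ is sub-critical, whence $G^{*}\in\mathcal{M}(\mathcal{C})$ by Proposition \ref{prop:NW}.
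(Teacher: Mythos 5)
Your proof is correct, but it takes a genuinely different route from the paper's. The paper verifies Ramseyness by the same density count ($e(G^*)=e(G)+4$, $v(G^*)=v(G)+2$), but it proves \emph{minimality} constructively: for each edge $e$ of $G^*$ it exhibits a cycle-free $2$-colouring of $G^*-e$, deferring the case $e\notin E(G)$ to the argument of the preceding diamond-replacement lemma, and for $e\in E(G)$ giving an explicit colouring rule (edges $ux,xv$ get the colour of $uv$ and $uy$ the other; $vy,yw$ get the colour of $vw$ and $xw$ the other) which transfers any cycle-free colouring of $G-e$ to one of $G^*-e$. You instead prove minimality purely through the density characterization of Proposition \ref{prop:NW}: your trade-back device (delete $x,y$, re-insert whichever of $uv,vw$ has both endpoints surviving, and verify $m-p\le 2k$) reduces every proper subgraph of $G^*$ to a proper subgraph of $G$, with the single extremal configuration $H^\circ=G$, $m=6$ ruled out by properness of $H$. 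This is precisely the kind of argument the paper says is possible for construction (2) but chooses not to carry out; your version has the advantage of being self-contained (no appeal to ``proceed analogously as in the previous lemma''), while the paper's colouring proof yields extra information---explicit control over how cycle-free colourings extend to $G^*$---which is what the paper later exploits in the proofs of Theorem \ref{thm:third} and Corollary \ref{cor:inf}. One small imprecision to repair: in your boundary case $H^\circ=G$ the closing chain $e(H)\le 2v(G)+2=2v(H)-2$ silently assumes $k=2$; for $k\le 1$ one has $m\le 3k\le 2k+1$, whence $e(H)=2v(G)-3+m\le 2v(G)+2k-2=2v(H)-2$ directly, so the gap is purely notational and closes in one line.
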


\begin{proof}
Let $G^*$ be the graph obtained from $G\in\mathcal{M}(\mathcal{C})$ by applying construction (3) to some path $uvw$. Since $e(G^*)=e(G)+4$ and $v(G^{*})=v(G)+2$, we have $G\in\mathcal{R}(\mathcal{C})$. To prove minimality, suppose that an edge $e$ is removed from $G^{*}$. Suppose that $e\notin E(G)$. In either case if $e$ is adjacent to $u$ or $w$ or if it is adjacent to $v$, proceed analogously as in the respective case in the proof of the previous lemma. Otherwise, if $e\in E(G)$, put a $2$-colouring on $E(G-e)$ and consider the colours of $uv$ and $vw$. Give the edges $ux, xv$ the colour of $uv$ and $uy$ the other colour. Also, give the edges $vy, yw$ the colour of $vw$ and $xw$ the other colour. If the $2$-colouring of $E(G-e)$ admits no monochromatic cycles, then neither does the so obtained $2$-colouring of $E(G^{*}-e)$.
\end{proof}

Finally, we are able to prove Corollary \ref{cor:inf}.

\begin{proof}
In order to obtain infinitely many graphs $G\in\mathcal{M}(\mathcal{C})$ with $\chi (G)=4$ fix a copy of $K_4$ in $K_5-e$ and let $e$ be an edge not belonging to that copy; now simply replace $e$ by a diamond, then replace an edge of that diamond by a diamond and so on. In order to obtain infinitely many $G\in\mathcal{M}(\mathcal{C})$ with $\chi (G)=3$ note that replacing every edge of any graph in $\mathcal{M}(\mathcal{C})$ results in precisely those graphs required. Finally, in order to obtain infinitely many graphs $G\in\mathcal{M}(\mathcal{C})$ with $\chi (G)=2$ start with $G_0:=K_{3, 5}\in\mathcal{M}(\mathcal{C})$ and repeatedly apply the following extension: apply construction (3) to some path $uvw$ in $G_i$ and let $x, y$ denote the two new vertices. Now apply construction (3) to the path $xvy$, thus producing two further vertices $x', y'$. Note that the resulting graph $G_{i+1}\in\mathcal{M}(\mathcal{C})$ is bipartite: Given a $2$-colouring on $V(G_i)$, give $x, y$ the colour of $v$ and $x', y'$ the other colour. (Alternatively note that any odd cycle, which may arise in the intermediate graph, must be using one of the edges $xv, yv$ and is thus destroyed in the construction of $G_{i+1}$.)
\end{proof}

\section{Proof of theorem \ref{thm:third}}

\begin{proof}
The proof is by induction on $n\geq 5$ and makes heavy use of constructions (1) and (2) as in \ref{thm:second}. For $n=5$ the result needs to be verified manually, and indeed $G=K_5-e$ works for all forests of cycles $F$ with $3\leq v(F)\leq 5$.

Let $x, y$ denote the non-adjacent vertices of $K_5-e$ and let $a, b, c$ denote the other three.

\begin{enumerate}
\item If w.l.o.g. $F$ is the red-coloured triangle $abc$, colour the edges $ay$ and $cx$ red and the remaining path $a-x-b-y-c$ blue.
\item If w.l.o.g. $F$ is the red-coloured $4$-cycle $a-b-c-x$, colour edge $cy$ red and the remaining path $x-b-y-a-c$ blue.
\item If w.l.o.g. $F$ is a red-coloured $C_5$, colour the remaining $4$-path blue.
\item If $F$ is a bowtie and the two triangles are of the same colour, colour the remaining $3$-path with the opposite colour.
\item If $F$ is a bowtie and the two triangles are of distinct colours, colour the remaining edges using each colour at least once.
\end{enumerate}

The aim in the induction step is to carefully build graphs in $\mathcal{M}(\mathcal{C})$ containing some prescribed forests of cycles from those containing some suitable smaller forest of cycles as provided by the induction hypothesis, while maintaining the possibility to extend the edge-colouring without creating new monochromatic cycles.\\

\textsl{Step 1 (Creating new space).} To begin with, we reduce the proof from $n\geq \left|F\right|$ to $n=\left|F\right|$. Fix $F$ and suppose $G\in\mathcal{M}(\mathcal{C})$ with $v(G)=v(F)$ is as in the statement of the theorem. We want to increase $G$ by one vertex while maintaining the containment of $F$ and the colouring extension property: Pick a vertex $v\in G$ with $d(v)=3$. Since $v(G)=v(F)$, such lies on precisely one cycle $C$ in $F$. Hence it is incident to an edge $vw$, which is not part of $C$ (even though $w$ may be); if $v$ is not in $F$, pick $vw\notin E(F)$, too. Further pick $u\in C$ such that $uv$ is an edge of $C$. Apply (1) to the path $u-v-w$, thus deleting the edge $vw$ and creating a new vertex $x$ incident to all of $u, v, w$. Note that by removing the edge $vw$ we have not destroyed any cycle of $F$ since thanks to $d(v)=3$, $vw$ is not an edge of $F$. Now given any $2$-edge-colouring of $G-F$ (or $G-F-vw$, respectively) as in the statement of the theorem, extend it by giving $xu$ and $xw$ arbitrary opposite colours and give $xv$ the colour opposite to that of $C$. If we have thus created a new monochromatic cycle, it has to pass through $x$, and hence, by choice of colouring, through $v$. This, however, is impossible since $v$ has maintained $d(v)=3$ throughout the construction. For the rest of the proof we can assume that $F$ is a spanning subgraph of the minimal Ramsey graph that contains it.\\

\textsl{Step 2 (Growing new trees).} We show how to extend the result for $F$ to that for $F$ with a disjoint triangle. Let $G\in\mathcal{M}(\mathcal{C})$ with $F\subset G$ and as in the statement of the theorem, and now without loss of generality $v(F)=v(G)$. Create new space in $G$ as in step 1, thus obtaining $G'$ with $v(G')=v(G)+1$ and the colouring property with respect to $F$ and fix the special edge-colouring of $G'-F$. Consider, as in step 1, the edge $xv$: Replace it by a diamond graph $D$ as in extension (2). Give the remaining so far uncoloured triangle in $D$, which is disjoint from $F$, a monochromatic colouring (this triangle is the new tree). If this is the colour of $xv$, give the two edges in $D$ now incident to $v$ distinct colours. If this is not the colour of $xv$, give the two edges in $D$ now incident to $v$ the colour of $xv$.\\

What we have so far achieved is that it suffices to prove the result for spanning trees of cycles. Note that any such can be obtained recursively by (1) starting with a triangle (2) enlarging it to required size (while it is a 'leaf' of the tree of cycles) (3) creating a required number of branches (that is, pairwise disjoint triangles) and repeating the procedure for any of the new branch triangles in turn. To complete the proof it therefore merely suffices to show how to enlarge cycles in $F$ irrespective of their distribution of attached branches, how to create a new triangle at a given vertex of degree $2$ in $F$ (\emph{extending an existing branch}), and finally, how to create a new triangle at a vertex, which is already used by more than one triangle (\emph{creating a new branch}).\\

\emph{Step 3 (Enlarging existing cycles).} Let $C$ be a cycle in $F$ to be enlarged and let $G\in\mathcal{M}(\mathcal{C})$ be for $F$ as in the statement of the theorem. Let $u-v-w$ be any $2$-path in $C$. Apply extension (1) as in Theorem \ref{thm:second}, thus producing a new vertex $x$ adjacent to all of $u, v, w$. The cycle $C$ is now enlarged in the resulting graph $G^+$ since $vw$ has been replaced by the $2$-path $v-x-w$. Any cycle-monochromatic $2$-edge-colouring $c$ of the enlarged forest $F^+$ now induces a cycle-monochromatic $2$-edge-colouring of $F$; pick a respective $2$-edge-colouring of $G-F$ and extend it to a respective colouring of $G^+-F^+$ by giving edge $xu$ the colour opposite of that of $xv$ in $c$.\\

\textsl{Step 4 (Extending existing branches).} Let $F\subset G$ be as before, and suppose that at $v\in F$ with $d(v)=2$ in $F$ a new triangle branch is to be created. Let $vw$ denote an edge not in $F$. Replace it by a diamond $D$, as before, and give the two edges in $D$ incident to $w$ distinct colours. Verifying the colouring property is now analogous to Step 2.\\

\textsl{Step 5 (Creating new branches).} Suppose that $u$ is a vertex of $F\subset G$, which lies in at least two triangles in $F$, and that a further triangle containing $u$ is to be created. Fix one of the triangles, which without loss of generality is a leaf to the tree of cycles, and label its remaining vertices $v$ and $w$. Apply (1) to $u-v-w$, thus destroying(!) one of the already existing triangles by removing edge $vw$, but instead creating the two new triangles $uvx$ and $uwx$, sharing edge $xu$. Apply now (1) again to the path $u-v-x$, thus destroying triangle $uvx$ by removing edge $vx$, but creating the new triangle $uvx'$, which is edge-disjoint from triangle $uwx$, and the extra edge $xx'$. Any cycle-monochromatic $2$-edge-colouring $c$ of the enlarged forest $F^+$ now induces a cycle-monochromatic $2$-edge-colouring of $F$; pick a corresponding special $2$-edge-colouring of $G-F$ and extend it to a special colouring of $G^+-F^+$ by giving edge $xx'$ the colour opposite to that of the triangles $uwx$ and $uvx'$ if these are monochromatic in $c$, and an arbitrary otherwise. This completes the proof.
\end{proof}

\section{Concluding Remarks}

In \ref{thm:first} we proved that every $G\in\mathcal{M}(\mathcal{C})$ can be obtained by starting with one of two base graphs by recursively splitting a vertex of a suitable supergraph. Any such description would shed light on how to constructively increase the girth while maintaining Ramseyness. This may be regarded as a first step towards the construction of Ramsey graphs for fixed length cycles $C_k$ with girth precisely $k$ (see e.g.~\cite{HRRS}, but to the best of our knowledge no explicit construction is known). We therefore raise the weaker question:

\begin{question}
For any $g\geq 3$, does there exist $G\in\mathcal{M}(\mathcal{C})$ with girth $g$?
\end{question}

We also note also how Lemma \ref{attached} implies that no minimal Ramsey-graph for $K_3$ is a minimial Ramsey-graph for $\mathcal{C}$ (since in the former every edge is in at least two triangles). It would be therefore interesting to work out what additional conditions on $G\in\mathcal{R}(\mathcal{C})$ ensure that $G\in\mathcal{R}(K_3)$. This might be possibly achieved by approximating the class $\mathcal{R}(K_3)$ by the classes $\mathcal{R}(\mathcal{C}_{\leq l})$ for fixed $l\geq 3$. Constructing graphs which are minimal with this property is probably hard as removing an edge and taking a good colouring gives rise to highly chromatic high-girth girth graphs (for which a non-recursive hypergraph-free construction was given only recently~\cite{A}). Note that similarly our remark in the introduction allows for a simple construction for $G\in\mathcal{R}_r(\mathcal{C}_{\text{odd}\leq l})$, just take $\chi (G)\geq 2^r+1$ and $g(G)\geq l$.\\

%note that third construction relied on fact that every edge in big diamond adjacent to vertex of deg 2, no such feature when generalizing to larger girth

%EVEN CYCLES?

%MORE COLOURS?

Another line of study relates to the fact that a $2$-edge-colouring of a Ramsey-graph for $K_3$ admits multiple monochromatic copies of $K_3$. As a step in this direction it therefore seems plausible to consider graphs with the approximative property that every $2$-edge-colouring admits either two disjoint monochromatic copies of $K_3$ in the same colour or a monochromatic cycle of length $\geq 4$. It is easy to see by case distinction that $G^{+}$, the graph obtained from some $G\in\mathcal{R}(C)$ by joining a new vertex to every vertex of $G$, has this property.\\

With regard to the existence of multiple monochromatic cycles, we observe that thanks to a known decomposition result into pseudoforests, see e.g.~\cite{PR}, one could in principle work out a theorem similar to ours for graphs, for which every $2$-edge-colouring admits a monochromatic connected graph containing at least two cycles. More generally, for $k\geq 1$ set $\mathcal{C}_k:=\{G:\; G\;\text{is connected and contains at least}\; k\;\text{cycles}\}$ and $m_k(G):=\frac{e(G)-1}{v(G)+k-2}$, excluding the trivial graphs. It is then easy to see that if $G$ contains a subgraph $H$ with $m_{k}(H)\geq r$, then $G$ is $r$-Ramsey for $\mathcal{C}_k$, and that if $G$ is minimal $r$-Ramsey for $\mathcal{C}_k$, then $m_k(H)<r$ for every proper subgraph $H\subset G$.

%\textbf{Proof.} For (a) wlog. assume $m_k(G)\geq r$, so $e(G)\geq r(v(G)+k-2)+1$. If the edges of $G$ are $r$-coloured, then some colour class contains at least $v(G)+k-1$ edges, hence consists of a tree on the vertices of $G$ with at least $k$ extra edges. But adding $k$ edges anywhere to a tree creates at least $k$ cycles, which is easily seen by induction. For (b) just note that, if $G$ is minimal, then no proper subgraph $H$ is $r$-Ramsey, hence, by (a), satisfies $m_k(H)<r$.\\

Crucial, however, to the characterization of graphs in $\mathcal{M}(\mathcal{C}_k)$ is the validity of the converse, which we do know about for $k\geq 3$. Indeed, with three available cycles allowing for circular arrangements, thus create new cycles, more complicated configuration may be needed in order for the Ramsey-property to be broken by the removal of any single edge. Instead, it seems more conceivable that the $+k$ in the density parameter is replaced by a larger quantity $f(k)$. To make this precise, for every $k\in\mathbb{N}$ let $f(k)$ denote the smallest natural number, if one exists, with the property that, for every integer $r\geq 1$, any graph $G$ satisfying $e(G)\leq r(v(G)+f(k)-2)$ edge-decomposes into at most $r$ subgraphs containing strictly less than $k$ (not necessarily edge-disjoint) cycles each. Note that $f$ is required to depend on $k$ only.

If $f(k)$ exist, then its are given by (the ceiling integer part of) the maximum of $\frac{e(G)}{r_k(G)}-v(G)+2$ taken over all graphs, where $r_k(G)$ denotes the size of a smallest edge-decomposition of $G$ into subgraphs with at most $k-1$ cycles. By the above, we know that $f(1)=1$ and $f(2)=2$. For $k\geq 3$ note that $f(k)\geq k$ holds by considering the chain of $k-1$ copies of triangles with two consecutive ones each identified at a vertex. We observe that for every $k$ the following are then equivalent:

\begin{enumerate}
\item $f(k):=\max\left\{\frac{e(G)}{r_k(G)}-v(G)+2:\; v(G)\geq 1\right\}<\infty$
\item $\forall r\in\mathbb{N}\backslash\{1\}$: $\mathcal{R}_r(\mathcal{C}_k)=\{G:\;\exists H\subseteq G:\; m_{f(k)}(H)\geq r\}$
\item $\forall r\in\mathbb{N}\backslash\{1\}$: $\mathcal{M}_r(\mathcal{C}_k)=\{G:\; m_{f(k)}(G)=r,\,\forall H\subset G, H\neq G:\; m_{f(k)}(H)< r\}$
\end{enumerate}

\begin{question}
For any $k\geq 3$, does $f(k)$ exists, that is, is $f(k)<\infty$? If so, what is $f(k)$?
\end{question}

Finally, we remark that cyclicity and $2$-connectivity are Ramsey equivalent and also that odd cyclicity and $3$-chromaticity are Ramsey equivalent. Undoubtedly, our results could therefore be generalized to both higher connectivity and chromaticity as well as to multiple colours.

We thank Dennis Clemens and Matthias Schacht for helpful comments.

\bibliography{cyclyfinal}

\end{document}